\theoremstyle{plain}  
\newtheorem{theorem}{Theorem} 
\newtheorem{lemma}{Lemma} 
\newtheorem{proposition}{Proposition} 
\newtheorem{corollary}{Corollary} 
\newtheorem{assumption}{Assumption} 
\theoremstyle{definition}
\theoremstyle{remark}
\newcommand{\real}{\mathbb{R}}
\newcommand{\Prob}{\mathrm{pr}}
\begin{document}

\title{Honest calibration assessment for binary outcome predictions}

\author[1]{Timo Dimitriadis}
\author[2]{Lutz D\"umbgen}
\author[3]{Alexander Henzi}
\author[4]{Marius Puke}
\author[2]{Johanna Ziegel}

\affil[1]{Heidelberg University, Germany}
\affil[2]{University of Bern, Switzerland}
\affil[3]{ETH Zürich, Switzerland}
\affil[4]{University of Hohenheim, Germany}


\affil[ ]{\linebreak 
	\href{mailto:timo.dimitriadis@awi.uni-heidelberg.de}{timo.dimitriadis@awi.uni-heidelberg.de}, 
	\href{mailto:lutz.duembgen@stat.unibe.ch}{lutz.duembgen@stat.unibe.ch},
	\href{mailto:alexander.henzi@stat.math.ethz.ch}{alexander.henzi@stat.math.ethz.ch},
	\href{mailto:marius.puke@uni-hohenheim.de}{marius.puke@uni-hohenheim.de},
	\href{mailto:johanna.ziegel@stat.unibe.ch}{johanna.ziegel@stat.unibe.ch}
}

\maketitle

\begin{abstract}
	Probability predictions from binary regressions or machine learning methods ought to be calibrated: If an event is predicted to occur with probability $x$, it should materialize with approximately that frequency, which means that the so-called calibration curve $p(\cdot)$ should equal the identity, $p(x) = x$ for all $x$ in the unit interval.
	We propose honest calibration assessment based on novel confidence bands for the calibration curve, which are valid only subject to the natural assumption of isotonicity. 
	Besides testing the classical goodness-of-fit null hypothesis of perfect calibration, our bands facilitate inverted goodness-of-fit tests whose rejection allows for the sought-after conclusion of a sufficiently well specified model. 
	We show that our bands have a finite sample coverage guarantee, are narrower than existing approaches, and adapt to the local smoothness of the calibration curve $p$ and the local variance of the binary observations.
	In an application to model predictions of an infant having a low birth weight, the bounds give informative insights on model calibration. \\[0.8em]
	\noindent \textit{Keywords:} 	
	Binary regression, calibration validation, isotonic regression, confidence band, goodness-of-fit, universally valid inference
\end{abstract}

\section{Introduction}
\label{sec:Introduction}

Consider first a univariate regression setting with fixed real covariates $x_1 \le \cdots \le x_n$ and independent binary observations $Y_1, \dots, Y_n \in \{0, 1\}$, where $\Prob(Y_i = 1) = p(x_i)$ for some unknown regression function $p : \real \to [0,1]$. Standard parametric models for this setting, e.g.\ logistic or probit regression, involve monotone regression functions $p$. Thus, an interesting nonparametric alternative would be to draw inference on $p$ under the sole assumption that it is isotonic on $\real$,
\begin{equation}
	\label{eq:isotonicity}
	p(x) \ \le \ p(x'), \quad x \leq x'.
\end{equation}

In the specific applications we have in mind, the $x_i$ are themselves probability predictions for the binary outcomes, i.e.~$x_i \in [0,1]$ is a prediction for the probability of the event $\{Y_i = 1\}$. In practice, the predictions can be obtained from a test sample of binary regressions, machine learning methods, or any other statistical model for binary data. A reliable interpretation of these predictions relies on the property of calibration, meaning that if the value $x_i$ is predicted, the corresponding event should indeed occur with probability $x_i$. In this setting, the regression function $p$ is called calibration curve, and it maps the predicted probabilities $x_i$ to the actual, or recalibrated, event probabilities $p(x_i) = \Prob(Y_i = 1)$. For calibrated predictions, the calibration curve equals the diagonal, $p(x) = x$ for all $x \in [0,1]$. Drawing inference about $p$ thus allows to assess the calibration of the predictions.

Testing the null hypothesis of calibration, $\mathbb{H}_0 \colon p(x) = x$ for all $x \in [0,1]$, is closely related to goodness-of-fit testing, which is crucial in applications, see e.g., \citet[Section 4.2]{Tutz2011} and \citet[Chapter 5]{Hosmer2013Book}.
It is still regularly carried out by the classical test of \cite{HosmerLemeshow1980}, which groups the predictions $x_i$ into bins and applies a $\chi^2$-test.
It is however subject to multiple criticisms:
First, its ad hoc choice of bins can result in untenable instabilities \citep{Bertolini2000, Allison2014}.
Second, placing the hypothesis of calibration in the null only allows for rejecting calibration rather than showing that a model is sufficiently well calibrated, where the latter would be highly desirable for applied researchers.
Third, the test rejects essentially all, even acceptably well-specified models in large samples \citep{Nattino2020, Paul2013}, resulting in calls for a goodness-of-fit tests with inverted hypotheses \citep{Nattino2020Rejoinder}, that is, tests where the hypothesis $p(x) = x$ is contained in the alternative.

We propose a statistically sound solution to these criticisms by constructing honest, simultaneous confidence bands $(L^\alpha,U^\alpha)$ for the function $p$. That is, for a given small number $\alpha \in (0,1)$ and $\mathcal{Y} := (Y_i)_{i=1}^n$, we compute data-dependent functions $L^\alpha = L^\alpha(\cdot,\mathcal{Y})$ and $U^\alpha = U^\alpha(\cdot,\mathcal{Y})$ on $\real$ such that
\begin{equation}
	\label{eq:coverage}
	\Prob\{L^\alpha \le p \le U^\alpha \ \text{on} \ \real\}
	\geq 1 - \alpha.
\end{equation}
In the context of calibration assessment, the functions $p, L^\alpha, U^\alpha$ are defined on $[0,1]$, and we call $(L^\alpha,U^\alpha)$ a calibration band, which is hence a confidence band for the calibration curve.
It allows for the desirable conclusion that with confidence $1-\alpha$, the true calibration curve $p$ lies inside the band, simultaneously for all values of the predicted probabilities. 
This nests a classical goodness-of-fit test with $\mathbb{H}_0 \colon p(x) = x$ by checking whether the band contains the diagonal $d(x) = x$ for all relevant values $x \in [0,1]$, but also any other hypothesis on the calibration curve such as e.g., an inverted goodness-of-fit test with $\mathbb{H}_0 \colon |p(x) - x| > \epsilon$ for some small $\epsilon>0$.
Hence, this band resolves the above mentioned criticisms of classical goodness-of-fit tests. 

Figure \ref{fig:ApplicationIntro} shows the bands in a large data example for probit model predictions for the binary outcome of a fetus having a low birth weight. 
See Section \ref{sec:ApplicationLBW} for additional details.
The test of Hosmer and Lemeshow clearly rejects calibration even though our bands indicate a well-calibrated model by including the diagonal line for all values in the unit interval. 
The magnified right panel of the figure shows that with confidence $1-\alpha$, the model is remarkably well calibrated for the most important region of small probability predictions in this application.
It is important to notice that even though we build our bands on the model predictions, the methodology applies equally to both, causal and predictive regressions.
An open-source implementation in the statistical software \texttt{R} \citep{R2022} is available under 
\href{https://cran.r-project.org/package=calibrationband}{https://cran.r-project.org/package=calibrationband}.

\begin{figure}[tb]
	\centering
	\includegraphics[width=\textwidth]{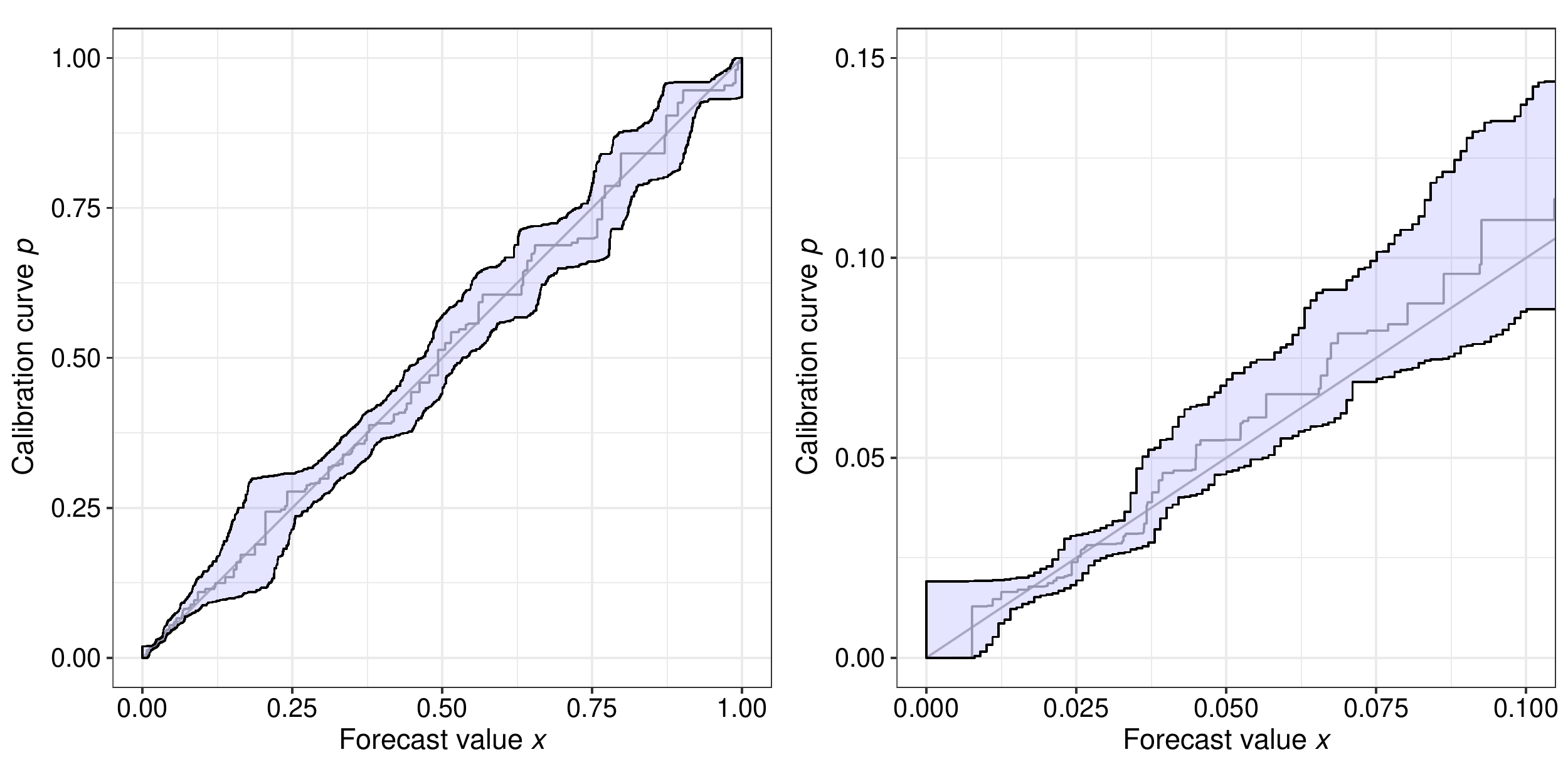}
	\caption{Left: Confidence band for the calibration curve (denoted calibration band in the application section) for the first model specification of the low birth weight application in Section \ref{sec:ApplicationLBW}. The blue band shows the confidence band based on the non-crossing method in \eqref{eq:NonCrossingBands} together with the rounding in \eqref{eqn:Rounding} with $K=10^3$, and the grey step function shows the isotonic regression estimate.
		Right: Magnified version focusing on predicted probabilities below $10\%$.}
	\label{fig:ApplicationIntro}
\end{figure}

Our confidence bands are valid in finite samples subject only to the mild monotonicity assumption at \eqref{eq:isotonicity}, implying that higher predictions entail a higher probability for $\{Y=1\}$, which is natural in the context of assessing calibration as already argued in \citet{DGJ_2021, Roelofs2020}. 
For classical goodness-of-fit tests, the null hypothesis $p(x) = x$ already nests the monotonicity assumption and if a researcher aims to demonstrate calibration, i.e., $p(x) = x$ holds at least approximately, it is unlikely that there are strong deviations from isotonicity.
Moreover, our confidence bands allow to detect and quantify violations of monotonicity as described in Appendix \ref{app:test_iso}. A non-monotonic $p$ may lead to a crossing of the lower and upper bound, i.e., $U^{\alpha}(x) < L^{\alpha}(x)$ for some $x$, which allows to reject monotonicity at level $\alpha$.
This is supported by the graphical display that reacts to non-isotonicity by generating elongated horizontal segments in both, the isotonic regression estimate and the confidence bands.
Finally, deriving confidence bands without any assumption on $p$ seems unrealistic and the assumption of monotonicity is relatively weak, e.g., in comparison to the parametric one used in \citet{Nattino2014}.

As expected for a non-parametric, pathwise and almost universally valid confidence band, we require large data sets of at least $5\,000$ observations to obtain sensibly narrow bands.
These are exactly the sample sizes where the classical goodness-of-fit tests become uninformative by rejecting all models in applications, see the simulation study of \citet{Kramer2007}.

A theoretical analysis shows that the proposed confidence band adapts locally to the smoothness of the function $p$ and to the variance of the observations. Adaptivity to the smoothness means that the width of the bands decreases faster with the sample size $n$ in regions where $p$ is constant, and at a slower rate where $p$ is steeper. This property is known for more general confidence bands for a monotone mean function developed by \citet{Yang2019}.
Adaptivity to the variance means that the band is substantially narrower at $x$ if $p(x)$ is close to zero or one, compared to $p(x)$ near $0.5$. In many practical applications, including the low birth weight predictions analyzed in this article, predicted probabilities close to zero or one are of most relevance and a sharp assessment of calibration in these regions is particularly important.

Existing methods for the construction of confidence bands in this setting are rare with the following two exceptions:
First, \cite{Nattino2014} propose the use of confidence bands based on a parametric assumption on the function $p$, which we show to have incorrect coverage in almost all of our simulation settings.
Second, the nonparametric bands of \citet{Yang2019} are valid, in a modified sense even in settings where the isotonicity assumption \eqref{eq:isotonicity} is violated. But they are shown to be wider than our bands in theory and simulations.

We explain the absence of competing methods by their theoretical difficulties.
Using asymptotic theory of the isotonic regression estimator is complicated as it requires the estimation of nuisance quantities such as the derivative of the unknown function $p$, the convergence rate depends on the functional form of $p$, it is subject to more restrictive assumptions and only results in bands with a pointwise interpretation \citep{Wright1981}.
Resampling schemes are theoretically found to be inconsistent for the isotonic regression \citep{Sen2010, GuntuboyinaSen2018}.
Other non-parametric approaches in the literature for constructing confidence bands for functions, many of them presented in the review by \citet{Hall2013}, are often pointwise, not simultaneous, and require the selection of tuning parameters that  may lead to instabilities, similar to the choice of the bins in the Hosmer-Lemeshow test. 
In contrast, the confidence bands proposed here are simple to compute and do not involve any implementation decisions resulting in a stable and reproducible method as called for by \citet{Stodden2016, Yu2020}.

\section{Construction of the confidence bands}
\label{sec:construction}

Within the regression setting, we construct confidence bands for the isotonic regression function $p$ by means of the classical confidence bounds of \cite{Clopper1934} for a binomial parameter.
Suppose that $Z$ is a binomial random variable with parameters $m$ and $q \in [0,1]$. For $\delta \in (0,1)$ let
\begin{align*}
	u^\delta(Z,m)
	&= \max\{\xi \in [0,1] \colon \mathrm{pbin}(Z,m,\xi) \ge \delta\} \\
	&= \begin{cases}
		\mathrm{qbeta}(1-\delta,Z+1,m-Z),
		& \ Z < m , \\
		1,
		& \ Z = m ,
	\end{cases} \\
	\ell^\delta(Z,m)
	&= \min\{\xi \in [0,1] \colon \mathrm{pbin}(Z-1,m,\xi) \le 1 - \delta\} \\
	&= \begin{cases}
		\mathrm{qbeta}(\delta,Z,m+1-Z),
		& \ Z > 0 , \\
		0,
		& \ Z = 0 .
	\end{cases}
\end{align*}
Here $\mathrm{pbin}(\cdot,m,\xi)$ denotes the distribution function of the binomial distribution with parameters $m$ and $\xi$, while $\mathrm{qbeta}(\cdot,a,b)$ stands for the quantile function of the beta distribution with parameters $a,b > 0$. Then
\[
\Prob\{q \le u^\delta(Z,m)\} \ge 1 - \delta
\quad\text{and}\quad
\Prob\{q \ge \ell^\delta(Z,m)\} \ge 1 - \delta .
\]
For the representation of $\ell^\delta(Z,m)$ and $u^\delta(Z,m)$ in terms of beta quantiles, we refer to \cite{Johnson2005}.

Assumption \eqref{eq:isotonicity} allows to construct confidence bands for $p$ as follows. With $p_i := p(x_i)$, for arbitrary indices $1 \le j \le k \le n$, the random sum
\[
Z_{jk} = \sum_{i=j}^k Y_i
\]
is stochastically larger than a binomial random variable with parameters $n_{jk} = k-j+1$ and $p_j$, and it is stochastically smaller than a binomial variable with parameters $n_{jk}$ and $p_k$. Thus, as explained in Lemma~\ref{lem:hoeffding},
\begin{equation}
	\label{eq:single.bounds}
	\Prob\{p_j \le u^\delta(Z_{jk},n_{jk})\} \ge 1 - \delta,
	\qquad
	\Prob\{p_k \ge \ell^\delta(Z_{jk},n_{jk})\} \ge 1 - \delta .
\end{equation}
If we combine these bounds for all pairs $(j,k)$ in a given set $\mathcal{J}$ and use the assumption at \eqref{eq:isotonicity}, then we may claim with confidence $1-2|\mathcal{J}| \delta$ that simultaneously for all $(j,k) \in \mathcal{J}$,
\[
p(x) \le u^\delta(Z_{jk},n_{jk}) \ \ \forall \ x \le x_j,
\qquad
p(x) \ge \ell^\delta(Z_{jk},n_{jk}) \ \ \forall \ x \ge x_k .
\]
Specifically, let $\mathcal{J}$ be the set of all index pairs $(j,k)$ such that $j \le k$ and $x_{j-1} < x_j$ and $x_k < x_{k+1}$.
If there are tied values in $(x_i)_{i=1}^n$, $\mathcal{J}$ selects the outermost indices of the tied values.
Hence, if $\{x_1,\ldots,x_n\}$ contains $N \le n$ different points, then $|\mathcal{J}| = (N^2 + N)/2$. Consequently, for a given confidence level $1 - \alpha \in (0,1)$, we may combine the bounds $u^\delta(Z_{jk},n_{jk})$ and $\ell^\delta(Z_{jk},n_{jk})$ with $\delta = \alpha/(N^2+N)$ to obtain a first confidence band.

\begin{theorem}
	\label{thm:Coverage}
	For $x \in \real$, let
	\begin{align}
		\label{eq:upper_bound}
		U^{\alpha, \mathrm{raw}}(x)
		&= \inf_{(j,k) \in \mathcal{J}\colon x_j \ge x}
		u^{\alpha/(N^2+N)}(Z_{jk},n_{jk}) , \\
		\label{eq:lower_bound}
		L^{\alpha, \mathrm{raw}}(x)
		&= \sup_{(j,k) \in \mathcal{J}\colon x_k \le x}
		\ell^{\alpha/(N^2+N)}(Z_{jk},n_{jk}) ,
	\end{align}
	where $\inf_\emptyset := 1$ and $\sup_\emptyset := 0$. If $p$ satisfies the isotonicity assumption at \eqref{eq:isotonicity}, then the resulting confidence band $(L^{\alpha,\mathrm{raw}},U^{\alpha,\mathrm{raw}})$ satisfies requirement \eqref{eq:coverage}.
\end{theorem}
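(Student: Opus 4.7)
The plan is to reduce the coverage statement to a union bound over the $2|\mathcal{J}| = N^2+N$ "bad" events
\[
A^+_{jk} = \bigl\{p_j > u^{\delta}(Z_{jk},n_{jk})\bigr\},
\qquad
A^-_{jk} = \bigl\{p_k < \ell^{\delta}(Z_{jk},n_{jk})\bigr\},
\]
indexed by $(j,k) \in \mathcal{J}$, with $\delta = \alpha/(N^2+N)$. The argument has three steps, and the whole point of the proof is that isotonicity lets one upgrade a pointwise bound at a single index into a uniform bound over a whole half-line, while at the same time the choice of $\mathcal{J}$ keeps the number of bounds under control.

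\textbf{Step 1: Control each bad event.} Invoke Lemma~\ref{lem:hoeffding} (the cited stochastic-dominance result): because $p_j \le p_i \le p_k$ for $j \le i \le k$, the sum $Z_{jk}$ stochastically dominates $\mathrm{Bin}(n_{jk},p_j)$ and is dominated by $\mathrm{Bin}(n_{jk},p_k)$. Combined with the Clopper--Pearson guarantees recalled just above the theorem, this yields $\Prob(A^+_{jk}) \le \delta$ and $\Prob(A^-_{jk}) \le \delta$ for every $(j,k) \in \mathcal{J}$, exactly the inequalities in \eqref{eq:single.bounds}.

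\textbf{Step 2: Turn pointwise bounds into a uniform band via isotonicity.} Work on the good event $\Omega_0 := \bigcap_{(j,k) \in \mathcal{J}} \bigl( (A^+_{jk})^c \cap (A^-_{jk})^c \bigr)$ and fix $x \in \real$. For any $(j,k) \in \mathcal{J}$ with $x_j \ge x$, the isotonicity assumption \eqref{eq:isotonicity} gives $p(x) \le p(x_j) = p_j$, and on $\Omega_0$ we further have $p_j \le u^\delta(Z_{jk},n_{jk})$. Taking the infimum over such pairs in the definition \eqref{eq:upper_bound} yields $p(x) \le U^{\alpha,\mathrm{raw}}(x)$; when no such pair exists, the convention $\inf_\emptyset = 1$ makes the bound trivially valid. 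An entirely symmetric argument with \eqref{eq:lower_bound} gives $p(x) \ge L^{\alpha,\mathrm{raw}}(x)$. Since $x$ was arbitrary, the containment $L^{\alpha,\mathrm{raw}} \le p \le U^{\alpha,\mathrm{raw}}$ holds everywhere on $\Omega_0$.

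\textbf{Step 3: Union bound.} Combining Step 1 with a union bound over $\mathcal{J}$ gives
\[
\Prob(\Omega_0^c) \le 2 |\mathcal{J}|\, \delta = (N^2+N)\cdot \frac{\alpha}{N^2+N} = \alpha,
\]
so $\Prob(\Omega_0) \ge 1-\alpha$ and the coverage requirement \eqref{eq:coverage} follows from Step 2.

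There is no real obstacle beyond careful bookkeeping: the argument is a clean combination of Clopper--Pearson exactness, stochastic dominance under isotonicity, and a Bonferroni correction. The only point that deserves a moment of care is the reduction from all $\binom{n+1}{2}$ index pairs to $|\mathcal{J}| = (N^2+N)/2$: restricting $\mathcal{J}$ to pairs $(j,k)$ whose endpoints are the outermost among their tied blocks loses nothing, because isotonicity forces $p$ to be constant on each tied block, so the bound obtained from the outermost indices already dominates the bounds from any interior indices. This is what makes $N^2+N$ (rather than something involving $n$) the right divisor in the definition of $\delta$.
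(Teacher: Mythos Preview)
Your proof is correct and follows exactly the paper's own argument: the single-pair bounds \eqref{eq:single.bounds} via Lemma~\ref{lem:hoeffding}, extension to half-lines by isotonicity, and a Bonferroni union bound over the $2|\mathcal{J}| = N^2+N$ events with $\delta = \alpha/(N^2+N)$. One small remark on your closing paragraph: constancy of $p$ on a tied block follows simply from $p$ being a function of $x$ (equal covariates give equal $p_i$), not from isotonicity, and in any case this observation is not needed for the coverage statement as formulated.
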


The functions $U^{\alpha,\mathrm{raw}}, L^{\alpha,\mathrm{raw}}$ are isotonic and piecewise constant. Precisely, with $x_0 := -\infty$ and $x_{n+1} := \infty$, we know that $U^{\alpha,\mathrm{raw}} = 1$ on $(x_n,\infty)$, $L^{\alpha,\mathrm{raw}} = 0$ on $(-\infty,x_1)$, and
\begin{align*}
	U^{\alpha,\mathrm{raw}}(x) & = U^{\alpha,\mathrm{raw}}(x_i), \ \  x \in (x_{i-1},x_i] , \\
	L^{\alpha,\mathrm{raw}}(x) & = L^{\alpha,\mathrm{raw}}(x_i), \ \  x \in [x_i,x_{i+1}) ,
\end{align*}
for $i = 1, \dots, n$.
Consequently, computing the band $(L^{\alpha,\mathrm{raw}}, U^{\alpha,\mathrm{raw}})$ amounts to determining the $2n$ numbers $L^{\alpha,\mathrm{raw}}(x_i)$ and $U^{\alpha,\mathrm{raw}}(x_i)$, $i = 1, \dots, n$.

The confidence band proposed in Theorem~\ref{thm:Coverage} has two potential drawbacks. First, a natural nonparametric estimator for the function $p$ under the assumption \eqref{eq:isotonicity} is given by a minimizer $\hat{p}$ of $\sum_{i=1}^n \{h(x_i) - Y_i\}^2$ over all isotonic functions $h\colon [0,1] \to [0,1]$ \citep{DGJ_2021}. This minimizer is unique on the set $\{x_1,\ldots,x_n\}$. But there is no guarantee that $L^{\alpha,\mathrm{raw}} \le \hat{p} \le U^{\alpha,\mathrm{raw}}$. Second, the upper and lower bounds in \eqref{eq:upper_bound} and \eqref{eq:lower_bound} may even cross, resulting in an empty, and hence, nonsensical confidence band. 
These problems can be dealt with by using the non-crossing confidence band $(L^{\alpha,\mathrm{nc}}, U^{\alpha,\mathrm{nc}})$ given by pointwise minima and maxima: 
\begin{align}
	\label{eq:NonCrossingBands}
	L^{\alpha,\mathrm{nc}} = \min(L^{\alpha,\mathrm{raw}}, \hat{p}),
	\qquad
	U^{\alpha,\mathrm{nc}} = \max(L^{\alpha,\mathrm{raw}}, \hat{p}) .
\end{align}
Obviously, $L^{\alpha,\mathrm{nc}} \le \hat{p} \le U^{\alpha,\mathrm{nc}}$ on $\real$.
Our simulation experiments indicate that $(L^{\alpha,\mathrm{raw}}, U^{\alpha,\mathrm{raw}}) = (L^{\alpha,\mathrm{nc}},U^{\alpha,\mathrm{nc}})$ holds in almost all cases whenever $p$ satisfies \eqref{eq:isotonicity}; see Section \ref{sec:Simulations} for details. 
The potential crossing of the two bounds in Theorem \ref{thm:Coverage} also has an advantage. It allows for inference about the non-isotonicity of $p$, see Appendix \ref{app:test_iso}.

A potential obstacle in the practical application of the confidence bands proposed in this section is that their computation requires $\mathcal{O}(N^2)$ steps. This can be relieved by using a smaller family of index pairs $(j, k)$ in the definition of the confidence band. 
Specifically, if for some fixed integer $K \geq 1$ differences in the covariate smaller than $K^{-1}$ are regarded as negligible, then one could define
\begin{align}
	\label{eqn:Rounding}
	\widetilde{\mathcal{J}} = \{(j,k)\colon \{x_j\dots, x_k\} = \{x_1, \dots, x_n\} \cap [r/K, s/K] \text{ for some } r, s \in \mathbb{Z} \},
\end{align}
such that only blocks of covariate values between $r/K$ and $s/K$, $r,s \in \mathbb{Z}$, are considered. The resulting band is still honest, can be computed in $\mathcal{O}(|\widetilde{\mathcal{J}}|)$ steps, and one can reduce the correction factor of the significance level in \eqref{eq:upper_bound} and \eqref{eq:lower_bound} from $N^2+N$ to $|\widetilde{\mathcal{J}}|$. The drawback is that the constant regions in $L^{\alpha}$ and $U^{\alpha}$ become larger, thereby limiting the adaptivity of the band, so the number $K$ should not be too small.
We henceforth refer to the restricted choice of $\widetilde{\mathcal{J}}$ in \eqref{eqn:Rounding} as the rounding method.
Section 1 in the Supplementary Material illustrates in simulations that the rounding method drastically decreases the computation time and even results in narrower bands for all but very steep regions of $p$.

\section{Relation to \cite{Yang2019}}
\label{sec:Relation_to_YB}

The methods of \cite{Yang2019} may be adapted to the present regression setting with covariates $x_1 \le \cdots \le x_n$ as follows: With the isotonic estimator $\hat{p}$ introduced before, let
\[
Z_{jk}^{\mathrm{iso}} = \sum_{i=j}^k \hat{p}(x_i) .
\]
Set
\begin{align}
	\label{eq:upper_bound_YB}
	U^{\alpha, \mathrm{YB}}(x)
	& = \inf_{(j,k)\in \mathcal{J}\colon x_j \ge x}
	\Bigl[ \frac{Z_{jk}^{\mathrm{iso}}}{n_{jk}}
	+ \sqrt{\frac{\log\{(N^2 + N)/\alpha\}}{2 n_{jk}}} \Bigr] , \\
	\label{eq:lower_bound_YB}
	L^{\alpha, \mathrm{YB}}(x)
	& = \sup_{(j,k)\in \mathcal{J}\colon x_k \le x}
	\Bigl[ \frac{Z_{jk}^{\mathrm{iso}}}{n_{jk}}
	- \sqrt{\frac{\log\{(N^2 + N)/\alpha\}}{2 n_{jk}}} \Bigr] .
\end{align}
This defines a confidence band $(L^{\alpha,\mathrm{YB}},U^{\alpha,\mathrm{YB}})$ with the following property:
\begin{equation}
	\label{eq:coverage_YB}
	\Prob \{
	L^{\alpha,\mathrm{YB}} \le \tilde{p} \le U^{\alpha,\mathrm{YB}} 
	\ \ \text{on} \ \ \mathbb{R} \}
	\ge 1 - \alpha ,
\end{equation}
where $\tilde{p}\colon \mathbb{R} \to [0,1]$ is any fixed isotonic function minimizing $\sum_{i=1}^n \{\tilde{p}(x_i) - p_i\}^2$. Thus one obtains a confidence band with guaranteed coverage probability $1 - \alpha$ for an isotonic approximation of $p$, even if \eqref{eq:isotonicity} is violated. The proof of \eqref{eq:coverage_YB} follows from the arguments of \cite{Yang2019}, noting that the random variables $Y_i$ are sub-Gaussian with scale parameter $\sigma = 1/2$. Thus, $\mathbb{E} \exp(t(Y_i - p_i)) \le \exp(\sigma^2t^2/2)$ for all $t \in \real$, implying that for arbitrary $\eta \ge 0$,
\[
\Prob\{ \pm (Z_{jk} - \mathbb{E} Z_{jk}) \ge \eta\}
\le \exp(-2 n_{jk} \eta^2) ,
\]
see \cite{Hoeffding1963}. The following result shows that the confidence bands $(L^{\alpha,\mathrm{raw}},U^{\alpha,\mathrm{raw}})$ and $(L^{\alpha,\mathrm{nc}},U^{\alpha,\mathrm{nc}})$ are always contained in the band $(L^{\alpha,\mathrm{YB}},U^{\alpha,\mathrm{YB}})$.

\begin{theorem}
	\label{thm:iso}
	For all $\alpha \in (0, 1)$ and any data vector $\mathcal{Y} \in \{0,1\}^n$,
	\[
	L^{\alpha,\mathrm{YB}} \le L^{\alpha,\mathrm{nc}} \le L^{\alpha,\mathrm{raw}},
	\quad
	U^{\alpha,\mathrm{raw}} \le U^{\alpha,\mathrm{nc}} \le U^{\alpha,\mathrm{YB}}
	\ \ \text{on} \ \ \mathbb{R}.
	\]
\end{theorem}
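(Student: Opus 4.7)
The middle inequalities $L^{\alpha,\mathrm{nc}} \le L^{\alpha,\mathrm{raw}}$ and $U^{\alpha,\mathrm{raw}} \le U^{\alpha,\mathrm{nc}}$ follow immediately from the definition \eqref{eq:NonCrossingBands}, since $L^{\alpha,\mathrm{nc}} = \min(L^{\alpha,\mathrm{raw}},\hat{p})$ and $U^{\alpha,\mathrm{nc}} = \max(U^{\alpha,\mathrm{raw}},\hat{p})$. By symmetry between the upper and lower bounds, I focus on the outer inequality $U^{\alpha,\mathrm{nc}} \le U^{\alpha,\mathrm{YB}}$. Since $U^{\alpha,\mathrm{nc}}$ is a pointwise maximum of $U^{\alpha,\mathrm{raw}}$ and $\hat{p}$, this further reduces to proving $\hat{p} \le U^{\alpha,\mathrm{YB}}$ and $U^{\alpha,\mathrm{raw}} \le U^{\alpha,\mathrm{YB}}$ separately on $\real$.

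For $\hat{p} \le U^{\alpha,\mathrm{YB}}$, fix $x \in \real$ and any $(j,k) \in \mathcal{J}$ with $x_j \ge x$. Isotonicity of $\hat{p}$ gives $\hat{p}(x) \le \hat{p}(x_j) \le n_{jk}^{-1}\sum_{i=j}^{k}\hat{p}(x_i) = Z_{jk}^{\mathrm{iso}}/n_{jk}$, which is trivially bounded above by the summand inside the infimum in \eqref{eq:upper_bound_YB}. Taking the infimum over $(j,k) \in \mathcal{J}$ with $x_j \ge x$ yields $\hat{p}(x) \le U^{\alpha,\mathrm{YB}}(x)$, as desired.

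The substantive part is $U^{\alpha,\mathrm{raw}} \le U^{\alpha,\mathrm{YB}}$, which combines Hoeffding's inequality with the block structure of the isotonic regression estimator. Setting $\delta = \alpha/(N^2+N)$ and $h(n) = \sqrt{\log(1/\delta)/(2n)}$, the Hoeffding tail bound (which is the same bound invoked by the authors to prove \eqref{eq:coverage_YB}) gives $u^\delta(Z,n) \le Z/n + h(n)$. The plan is to exhibit, for each $(j,k) \in \mathcal{J}$ with $x_j \ge x$, a companion pair $(j',k') \in \mathcal{J}$ with $x_{j'} \ge x$ satisfying both $Z_{j'k'}/n_{j'k'} \le Z_{jk}^{\mathrm{iso}}/n_{jk}$ and $n_{j'k'} \ge n_{jk}$; Hoeffding then gives $u^\delta(Z_{j'k'},n_{j'k'}) \le Z_{jk}^{\mathrm{iso}}/n_{jk} + h(n_{jk})$, and taking the infimum over $(j,k)$ concludes. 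The crucial structural input is the greatest-convex-minorant characterization: writing $F(k) = \sum_{i \le k}Y_i$ and $G(k) = \sum_{i \le k}\hat{p}(x_i)$, the sequence $G$ is the greatest convex minorant of $F$, so $G \le F$ with equality at the endpoints of each constancy block $B_m = \{a_m,\dots,b_m\}$ of $\hat{p}$ (whose value I write $v_m$), yielding the block-suffix inequality $\sum_{i=j}^{b_m}Y_i \le v_m(b_m - j + 1)$ for every $j \in B_m$.

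In the single-block case $[j,k] \subseteq B_{m_0}$, the choice $(j',k') = (j,b_{m_0}) \in \mathcal{J}$ does the job: the block-suffix inequality gives $Z_{j,b_{m_0}}/n_{j,b_{m_0}} \le v_{m_0} = Z_{jk}^{\mathrm{iso}}/n_{jk}$, while $k \le b_{m_0}$ yields $n_{j,b_{m_0}} \ge n_{jk}$. The main technical obstacle will be the multi-block case in which $[j,k]$ intersects several constancy blocks $B_{m_0},\dots,B_{m_1}$. There I would extend $k$ to $b_{m_1}$ to reach a pair at a GCM corner where $G$ and $F$ agree (so that $Z_{j,b_{m_1}} \le Z_{j,b_{m_1}}^{\mathrm{iso}}$), and then balance the potentially increased $\hat{p}$-average $Z_{j,b_{m_1}}^{\mathrm{iso}}/n_{j,b_{m_1}} \ge Z_{jk}^{\mathrm{iso}}/n_{jk}$ against the decreased Hoeffding margin $h(n_{j,b_{m_1}}) \le h(n_{jk})$ by iterating the endpoint-extension argument and exploiting the piecewise-linearity of $G$ on each block.
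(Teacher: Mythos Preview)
Your handling of the middle inequalities, of $\hat p\le U^{\alpha,\mathrm{YB}}$, and of the single-block case is correct and uses the same ingredients as the paper. The gap is in the multi-block step. The two separate conditions you aim for, namely $Z_{j'k'}/n_{j'k'}\le Z_{jk}^{\mathrm{iso}}/n_{jk}$ \emph{together with} $n_{j'k'}\ge n_{jk}$, are in general not simultaneously achievable. Take $n=4$, distinct $x_i$, and $\mathcal Y=(0,1,1,0)$, so that $\hat p=(0,\tfrac23,\tfrac23,\tfrac23)$ with blocks $\{1\}$ and $\{2,3,4\}$. For $(j,k)=(1,2)$ one has $Z_{12}^{\mathrm{iso}}/n_{12}=1/3$, but every admissible $(j',k')$ with $n_{j'k'}\ge 2$ satisfies $Z_{j'k'}/n_{j'k'}\ge 1/2$. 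Extending $k$ to $b_{m_1}=4$ raises the $\hat p$-average to $1/2$, and no ``balancing'' against the smaller Hoeffding margin can recover your two separate conditions; the only companion that works here for small $\tau$ is $(j',k')=(1,1)$, which has $n_{j'k'}<n_{jk}$.

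The paper avoids this by reversing the direction of the argument. Rather than fixing an arbitrary $(j,k)$ in the YB infimum and searching for a companion, it first establishes (Proposition~\ref{prop:isobounds}) that the YB infimum at each $x_i$ is \emph{attained} at some $(j^*,k^*)$ with $j^*=\min\{s:x_s\ge x_i\}$ and $k^*$ a right block endpoint of $\hat p$. This is exactly the ``piecewise-linearity'' idea you gesture at: on each constancy block the map $n_{jk}^{-1}\mapsto Z_{jk}^{\mathrm{iso}}/n_{jk}+\tau\, n_{jk}^{-1/2}$ is concave, so its minimum over $k$ lies at a block boundary. At such a $(j^*,k^*)$ the GCM characterization gives $Z_{j^*k^*}\le Z_{j^*k^*}^{\mathrm{iso}}$ (inequality~\eqref{eq:Z.and.Ziso}), and Corollary~\ref{cor:hoeffding.2} then yields
\[
u^{\alpha/(N^2+N)}(Z_{j^*k^*},n_{j^*k^*})
\ \le\ Z_{j^*k^*}^{\mathrm{iso}}/n_{j^*k^*}+\tau\, n_{j^*k^*}^{-1/2}
\ =\ U_i^{\alpha,\mathrm{YB}},
\]
whence $U_i^{\alpha,\mathrm{raw}}\le U_i^{\alpha,\mathrm{YB}}$. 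Your sketch can be repaired along precisely these lines, but the essential correction is to drop the requirement $n_{j'k'}\ge n_{jk}$ and instead use the concavity argument to move $k$ to a block endpoint without increasing the full YB term.
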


Recall that the inequalities $L^{\alpha,\mathrm{raw}} \leq	U^{\alpha,\mathrm{raw}}$ do not hold in general, and a crossing of the bounds allows to reject isotonicity at level $\alpha$, see Appendix \ref{app:test_iso}. In contrast, the bands by \citet{Yang2019} always contain the isotonic estimator $\hat{p}$, and are guaranteed to cover an optimal isotonic approximation of $p$ with probability at least $1-\alpha$. For calibration testing, the possibility of rejecting isotonicity seems more desirable than information about an isotonic approximation of the calibration curve, whose interpretation may be unclear in practice. 
It should be mentioned, however, that the band $(L^{\alpha,\mathrm{YB}},U^{\alpha,\mathrm{YB}})$ has a computational advantage. For the computation of $U_i^{\alpha,\mathrm{YB}}$ in \eqref{eq:upper_bound_YB}, it suffices to take the minimum over endpoints of constancy regions of $\hat{p}$, that is, all $(j,k) \in \mathcal{J}$ such that $j = \min(s\colon x_s\ge x_i)$ and $\hat{p}(x_k) < \hat{p}(x_{k+1})$ or $k = n$, see Proposition~\ref{prop:isobounds}. Likewise, for the computation of $L_i^{\alpha,\mathrm{YB}}$ in \eqref{eq:lower_bound_YB}, it suffices to take the maximum over all $(j,k) \in \mathcal{J}$ such that $\hat{p}(x_{j-1}) < \hat{p}(x_j)$ or $j = 1$ and $k = \max(s\colon x_s \le x_i)$. While the computation of $(L^{\alpha,\mathrm{raw}},U^{\alpha,\mathrm{raw}})$ or $(L^{\alpha,\mathrm{nc}},U^{\alpha,\mathrm{nc}})$ requires $\mathcal{O}(N^2)$ steps, the following lemma, whose proof is in the Supplementary Material, implies that the computation of $(L^{\alpha,\mathrm{YB}},U^{\alpha,\mathrm{YB}})$ requires only $\mathcal{O}(N \min\{n^{2/3},N\})$ steps.

\begin{lemma} 
	\label{lem:isopart}
	The cardinality of $\{\hat{p}(x_i)\colon i = 1, \dots, n\}$ is smaller than $3 n^{2/3}$.
\end{lemma}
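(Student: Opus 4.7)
The plan is to exploit the block structure of the isotonic regression estimator: each distinct value of $\hat{p}$ is attained on a maximal constancy block and equals a rational of the form $s/m$, where $m$ is the block length and $s \in \{0, 1, \dots, m\}$ is the number of ones among the $Y_i$ on that block. I will then bound the number of such rationals via a threshold argument on block lengths.

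\textbf{Step 1 (block structure).} I would first recall the standard fact that $\hat{p}$, as the isotonic least--squares estimator of $(Y_i)_{i=1}^n \in \{0,1\}^n$ with unit weights, partitions $\{1, \dots, n\}$ into consecutive ``solution blocks'' $B_1, \dots, B_L$ of some lengths $m_1, \dots, m_L$ with $\sum_{\ell=1}^L m_\ell = n$. On block $B_\ell$, the estimator takes the constant value $v_\ell = s_\ell/m_\ell$, where $s_\ell = \sum_{i \in B_\ell} Y_i \in \{0,1,\dots,m_\ell\}$. Since the blocks are maximal constancy intervals of an isotonic function, $v_1 < v_2 < \cdots < v_L$, so $L = |\{\hat{p}(x_i)\colon i = 1, \dots, n\}|$.

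\textbf{Step 2 (threshold split).} Fix an integer $M \geq 1$ to be chosen later, and split the blocks into short blocks with $m_\ell \leq M$ and long blocks with $m_\ell > M$. The number of distinct rationals $s/m$ with $1 \leq m \leq M$ and $0 \leq s \leq m$ is at most $\sum_{m=1}^M (m+1) = M(M+3)/2$; since the $v_\ell$ are pairwise distinct, the number of short blocks is bounded by this quantity. The number of long blocks is at most $n/(M+1)$ because their lengths sum to at most $n$ and each exceeds $M$. Hence
\begin{equation*}
L \ \le \ \frac{M(M+3)}{2} + \frac{n}{M+1}.
\end{equation*}

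\textbf{Step 3 (optimization).} Taking $M = \lfloor n^{1/3} \rfloor$ and using $M \le n^{1/3} < M+1$, the first term is at most $n^{2/3}/2 + 3n^{1/3}/2$ and the second is at most $n^{2/3}$, giving $L < 3n^{2/3}/2 + 3n^{1/3}/2 \le 3n^{2/3}$ for all $n \ge 1$ (since $n^{1/3} \le n^{2/3}$). Small cases such as $n = 1$ where $\lfloor n^{1/3}\rfloor$ barely constrains the bound are handled directly, as $L \le n$ is trivial there.

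\textbf{Main obstacle.} The only nontrivial ingredient is Step 1, which rests on the min--max (pool--adjacent--violators) representation of the isotonic regression estimator; the remainder is a clean counting argument. Once the block representation is in hand, the optimization over $M$ is elementary and the constant $3$ comes out with a comfortable margin.
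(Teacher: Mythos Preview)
Your proof is correct. Both your argument and the paper's rest on the same two structural facts: the isotonic estimator is constant on blocks $B_1,\dots,B_L$ with distinct values $v_\ell = s_\ell/m_\ell$, and consequently there can be at most $m+1$ blocks of any given length $m$ (equivalently, $M_d \le d+1$ in the paper's notation). Where you differ is in how you extract the bound from the resulting constrained maximization of $\sum_d M_d$ subject to $\sum_d d\,M_d = n$. The paper solves this linear program essentially exactly by a mass-shifting argument, relaxing integrality and pushing weight toward small $d$ until $M_d = d+1$ for $d \le d_o$ and $M_d = 0$ for $d \ge d_o+2$; this identifies the extremal configuration and yields a slightly sharper constant $C = 3^{2/3}(1+3/6^{1/3})/2 < 3$. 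Your threshold split is more elementary: it trades the exact optimum for a quick upper bound $M(M+3)/2 + n/(M+1)$ and then optimizes $M$. The cost is a looser constant (you land at roughly $3/2$ plus lower-order terms, comfortably below $3$), but the gain is a shorter, more transparent argument that avoids the continuous relaxation entirely. Either route is perfectly adequate for the lemma as stated.
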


\section{Theoretical properties of the confidence bands}
\label{sec:iso}

This section illustrates consistency and adaptivity properties of the confidence band $(L_n^{\alpha,\mathrm{raw}},U_n^{\alpha,\mathrm{raw}})$, where the subscript $n$ indicates the sample size, and we consider a triangular scheme of observations $(x_i,Y_i) = (x_{ni},Y_{ni})$, $i = 1, \dots, n$. We are interested in situations in which the observed covariates $x_{ni}$ could be the realizations of the order statistics of a random sample. Thus we extend the framework of \cite{Yang2019} and consider the following assumption.

\begin{assumption}
	\label{ass:A}
	Let $\mathrm{Leb}(\cdot)$ denote Lebesgue measure, and let $W_n(B) = \#\{i\colon x_{ni} \in B\}$ for $B \subset \real$. There exist a non-degenerate interval $[a_o,b_o]$ and constants $C_1, C_2 > 0$ such that for sufficiently large $n$, 
	\[W_n(B) \ge C_1 n \mathrm{Leb}(B)\]
	for arbitrary intervals $B \subset [a_o,b_o]$ such that $\mathrm{Leb}(B) \ge C_2 \log(n)/n$.
\end{assumption}

This assumption comprises the setting of \cite{Yang2019}. Let $G$ be a differentiable distribution function on $[0,1]$ such that $G'$ is bounded away from $0$ on $[a_o,b_o]$. If $x_{ni} = G^{-1}(i/n)$ for $i = 1, \dots, n$, then it is satisfied for any $C_1 < \inf_{[a_o,b_o]} G'$ and arbitrary $C_2 > 0$. The arguments in \citet[Section~4.3]{Moesching2020} can be modified to show that if $x_{n1},\ldots,x_{nn}$ are the order statistics of $n$ independent random variables with distribution function $G$, then Assumption \ref{ass:A} is satisfied almost surely, provided that $C_1, C_2 > 0$ are chosen appropriately.

\begin{theorem}
	\label{thm:asymptotics}
	Suppose that Assumption \ref{ass:A} is satisfied. Let $\rho_n = \log(n)/n$. There exist constants $C > 0$ depending only on $C_1,C_2$ with the following properties:
	
	\noindent
	(i) Suppose that $p$ is constant on $[a_o,b_o]$. With asymptotic probability one,
	\begin{align*}
		U_n^{\alpha,\mathrm{raw}}(x) &\le p(x) + C \sqrt{\rho_n/(b_o - x)},
		\quad x \in [a_o,b_o) , \\
		L_n^{\alpha,\mathrm{raw}}(x) &\ge p(x) - C \sqrt{\rho_n/(x - a_o)},
		\quad x \in (a_o,b_o] .
	\end{align*}
	
	\noindent
	(ii) Suppose that $p$ is Lipschitz-continuous on $[a_o,b_o]$ with Lipschitz constant $L > 0$. With asymptotic probability one,
	\begin{align*}
		U_n^{\alpha,\mathrm{raw}}(x) &\le p(x) + C (L\rho_n)^{1/3} ,
		\quad x \in [a_o, b_o - \rho_n^{1/3} L^{-2/3}] , \\
		L_n^{\alpha,\mathrm{raw}}(x) &\ge p(x) - C (L \rho_n)^{1/3} ,
		\quad x \in [a_o + \rho_n^{1/3} L^{-2/3}, b_o] .
	\end{align*}
	
	\noindent
	(iii) Suppose that $p$ is discontinuous at some point $x_o \in (a_o,b_o)$. With asymptotic probability one,
	\begin{align*}
		U_n^{\alpha,\mathrm{raw}}(x) &\le p(x_o-) + C \sqrt{\rho_n /(x_o - x)} ,
		\quad x \in [a_o,x_o) , \\
		L_n^{\alpha,\mathrm{raw}}(x) &\ge p(x_o+) - C \sqrt{\rho_n /(x - x_o)} ,
		\quad x \in (x_o,b_o] .
	\end{align*}
	
	\noindent
	(iv) Suppose that $\lim_{x \to a_o} p(x) = 0$. For sufficiently large $n$,
	\[
	\mathbb{E} \{U_n^{\alpha,\mathrm{raw}}(x)\}
	\le C \inf_{y \in (x,b_o]} \{ p(y) + \rho_n/(y - x) \} ,
	\quad x \in [a_o, b_o) .
	\]
	Analogously, if $\lim_{x \to b_o} p(x) = 1$, then for sufficiently large $n$,
	\[
	\mathbb{E} \{1 - L_n^{\alpha,\mathrm{raw}}(x)\}
	\le C \inf_{y \in [a_o,x)} \{ 1 - p(y) + \rho_n/(x - y) \} ,
	\quad x \in (a_o,b_o] .
	\]
\end{theorem}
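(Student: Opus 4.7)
The plan is to reduce all four claims to two elementary bounds on the Clopper--Pearson quantities $u^\delta, \ell^\delta$. Applying Hoeffding's inequality to the binomial distribution in the definition of $u^\delta$ yields the deterministic width estimate
\[
u^\delta(Z,m) - Z/m \;\le\; \sqrt{\log(1/\delta)/(2m)},
\qquad
Z/m - \ell^\delta(Z,m) \;\le\; \sqrt{\log(1/\delta)/(2m)} .
\]
Second, since $Z_{jk}$ is a sum of independent Bernoullis with means in $[p(x_j), p(x_k)]$, Hoeffding gives $\Prob(|Z_{jk}/n_{jk} - \E[Z_{jk}]/n_{jk}| \ge t) \le 2\exp(-2 n_{jk} t^2)$. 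Choosing the pair-specific deviation $t_{jk} = \sqrt{c\log(n)/n_{jk}}$ with $c > 1$ and union-bounding over the at most $n^2$ elements of $\mathcal{J}$ produces an event of probability $1 - O(n^{2-2c}) \to 1$ on which every $Z_{jk}/n_{jk}$ lies within $\sqrt{c\log(n)/n_{jk}}$ of its mean. Finally, Assumption~\ref{ass:A} turns interval lengths into sample sizes, $n_{jk} \ge C_1 n(x_k - x_j)$ whenever $[x_j, x_k] \subset [a_o, b_o]$ has length at least $C_2 \rho_n$. Combined with $\log(1/\delta) = \log\{(N^2+N)/\alpha\} \le C\log n$, these three ingredients yield, on the high-probability event, the master estimate
\[
u^{\alpha/(N^2+N)}(Z_{jk}, n_{jk}) \;\le\; p(x_k) + C\sqrt{\rho_n/(x_k - x_j)}
\]
and its symmetric lower-bound analogue.

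Parts (i)--(iii) now follow by choosing a well-placed block. For (i), take $x_j$ the smallest covariate $\ge x$ and $x_k$ the largest $\le b_o$; since $p \equiv p(x)$ on $[a_o, b_o]$ and $x_k - x_j$ is of order $b_o - x$, the master estimate immediately gives the claim. For (ii), use a block of length $h := \rho_n^{1/3} L^{-2/3}$; the Lipschitz bias $p(x_k) - p(x) \le Lh = (L\rho_n)^{1/3}$ and the stochastic width $\sqrt{\rho_n/h} = (L\rho_n)^{1/3}$ then match by design. For (iii), take $x_k$ just below the jump $x_o$, so that $p(x_k) \le p(x_o-)$ and $x_k - x_j$ is of order $x_o - x$. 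The three lower-bound statements are proved symmetrically, using blocks $[x_j, x_k]$ with $x_k \le x$ and the isotonicity bound $p(x_j) \ge p(x_o+)$ in case (iii).

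Part (iv) is the delicate one: Hoeffding's $\sqrt{\log(1/\delta)/m}$ width is too crude to reach the Poisson-like rate $\rho_n/(y-x)$ near $p = 0$, so one needs a Bernstein-type refinement combined with stochastic domination. For $y \in (x, b_o]$ and the block with $x_j$ the smallest covariate $\ge x$ and $x_k$ the largest $\le y$, isotonicity of $p$ together with monotonicity of $u^\delta(\cdot, m)$ in its first argument give
\[
\E\,u^\delta(Z_{jk}, n_{jk}) \;\le\; \E\,u^\delta(W, n_{jk}), \qquad W \sim \mathrm{Bin}(n_{jk}, p(y)).
\]
The key analytic ingredient is the estimate $\E\,u^\delta(W, m) \le C\{q + \log(1/\delta)/m\}$ for $W \sim \mathrm{Bin}(m, q)$; this is obtained from the beta-quantile representation of $u^\delta$ together with a Bennett/Bernstein bound that exploits $\mathrm{Var}(W) = mq(1-q) \le mq$ to sharpen the Hoeffding width into $\sqrt{q\log(1/\delta)/m} + \log(1/\delta)/m$, after which the square-root summand is absorbed via AM--GM into $q$ and $\log(1/\delta)/m$. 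Together with $n_{jk} \ge C_1 n(y - x)$, this yields $\E\,U_n^{\alpha,\mathrm{raw}}(x) \le C\{p(y) + \rho_n/(y-x)\}$, and taking the infimum over $y$ completes the upper claim; the lower statement is symmetric.

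The main obstacle is precisely this Bernstein refinement: the Hoeffding bound used for parts (i)--(iii) is blind to the small-variance regime near $p = 0$, so one has to open up the beta representation of $u^\delta$ and track the $q$-dependence of the binomial tail. A minor technical point shared by all four parts is that Assumption~\ref{ass:A} only produces the bound $n_{jk} \ge C_1 n(x_k - x_j)$ once $x_k - x_j \ge C_2 \rho_n$; shorter blocks can be enlarged to length $C_2 \rho_n$ without affecting any stated rate, because on the corresponding very short range of $x$ the right-hand sides in the theorem are already of order one and hence non-binding.
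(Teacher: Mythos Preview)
Your proposal is correct and follows essentially the same route as the paper: Hoeffding's width bound for $u^\delta$, a union bound over $\mathcal{J}$ to control all block averages simultaneously, Assumption~\ref{ass:A} to convert interval lengths into sample sizes, and for part~(iv) the Bernstein-type refinement $u^\delta(z,m) - z/m \le C\{\sqrt{(z/m)\log(1/\delta)/m} + \log(1/\delta)/m\}$ followed by Jensen and AM--GM. The only cosmetic difference is that in part~(iv) you first invoke the stochastic domination $Z_{jk} \preceq \mathrm{Bin}(n_{jk}, p(y))$ and then take expectations, whereas the paper takes the expectation of the deterministic bound directly and uses $\E\hat p_{jk} = p_{jk} \le p(y)$ afterward; both routes produce the same estimate.
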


Part~(i) implies that if $p$ is constant on $[a_o,b_o]$, then for arbitrary fixed $a_o < a < b < b_o$,
\[
\sup_{x \in [a_o,b]} \{U_n^{\alpha,\mathrm{raw}}(x) - p(x)\}^+ +
\sup_{x \in [a_o,b]} \{p(x) - L_n^{\alpha,\mathrm{raw}}(x)\}^+
= \mathcal{O}_p(\rho_n^{1/2}).
\]
Thus, parts~(i-ii) of this theorem are analogous to results of \citet[Sections 4.4 and 4.6]{Yang2019}.
Part~(iii) implies that with asymptotic probability one,
\[
U_n^{\alpha,\mathrm{raw}}(x)
< \frac{p(x_o-) + p(x_o+)}{2}
< L_n^{\alpha,\mathrm{raw}}(y)
\]
for $x < x_o - D \rho_n$, $y > x_o + D \rho_n$ and $D = 4 C^2 \{p(x_o+) - p(x_o-)\}^{-2}$. Thus, at points of discontinuity of $p$, the confidence band crosses a horizontal line on an interval of length $\mathcal{O}_p(\rho_n)$. Part~(iv) demonstrates that our bounds are particularly accurate in regions where $p(x)$ is close to $0$ or $1$. Specifically, suppose that for some $\gamma > 0$, $p(x) = \mathcal{O}\{(x - a_o)^\gamma\}$ for $x \in [a_o,b_o]$. Then plugging in $y(x) = x + \rho_n^{1/(\gamma + 1)}$ reveals that
\[
\mathbb{E}\{U_n^{\alpha,\mathrm{raw}}(x)\}
\le D \{(x - a_o)^\gamma + \rho_n^{\gamma/(\gamma + 1)}\} ,
\quad x \in [a_o,b_o] ,
\]
where $D = D(C_1,C_2,p)$. Analogously, if $1 - p(x) = \mathcal{O}\{(b_o - x)^\gamma\}$ for $x \in [a_o,b_o]$, then
\[
\mathbb{E}\{1 - L_n^{\alpha,\mathrm{raw}}(x)\}
\le D \{(b_o - x)^\gamma + \rho_n^{\gamma/(\gamma + 1)}\} ,
\quad x \in [a_o,b_o] .
\]
Presumably, the conclusions in part~(iv) are not satisfied for the confidence band $(L_n^{\alpha,\mathrm{YB}},U_n^{\alpha,\mathrm{YB}})$.

\section{Simulations}
\label{sec:Simulations}

Here, we illustrate that our confidence bands have correct coverage in the sense of \eqref{eq:coverage} and are narrower than existing techniques.
We consider both, the raw method in \eqref{eq:upper_bound} and \eqref{eq:lower_bound} and the non-crossing variant in \eqref{eq:NonCrossingBands}.
Both methods are combined with the rounding technique in \eqref{eqn:Rounding} with $K=10^3$ in order to facilitate faster computation at a minimal cost in accuracy.
For comparison, we use the bands of \cite{Yang2019} given in \eqref{eq:upper_bound_YB} and  \eqref{eq:lower_bound_YB} with a minimal variance factor of $\sigma^2 = 1/4$ and the parametric bands of \cite{Nattino2014}, implemented in the \texttt{GivitiR} package in the statistical software \texttt{R} \citep{R2022}.
Replication material for the simulations and applications is available under  \href{https://github.com/marius-cp/replication_DDHPZ22}{https://github.com/marius-cp/replication\_DDHPZ22}.

\begin{figure}[tb]
	\centering
	\includegraphics[width=\textwidth]{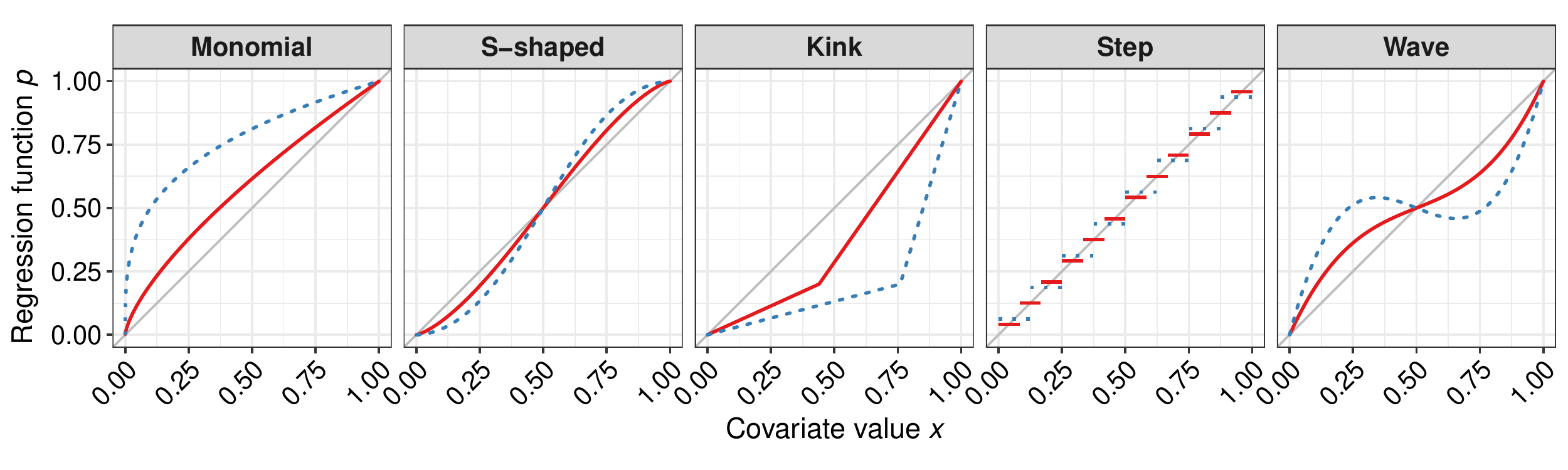}
	\caption{Illustration of the five simulated regression functions $p_s(\cdot)$, where the solid red line corresponds to the shape parameter value $s=0.3$ and the dashed blue line to $s=0.7$.}
	\label{fig:p_alternatives}
\end{figure} 

We use 1000 replications, a significance level of  $\alpha = 0.05$ and simulate the covariates $X \sim \operatorname{U}[0,1]$.
The binary outcomes are generated by $Y \sim \text{Bern}\{p_s(X)\}$ based on five distinct functional forms of the regression function $p_s(x)$ for $x \in [0,1]$ depending on a shape parameter $s \in \mathcal{S} := \{0, 0.1, \dots, 1\}$.
The first four specifications of $p_s(x)$ satisfy the isotonicity assumption at \eqref{eq:isotonicity} and cover smooth, non-smooth as well as discontinuous setups.
The last one contains non-isotonic functions $p_s(x)$ for $s > 0.5$.
The choice $s=0$ results in the diagonal line $p_0(x) = x$ whereas the deviation from the diagonal increases with $s$.
In particular, we consider the following specifications, which are illustrated in  Figure \ref{fig:p_alternatives} for two exemplary shape values $s \in \{0.3, 0.7\}$.

\begin{enumerate}
	\item  
	Monomial: \
	First, we use the regression function $p_s(x) = x^{1-s}$, where  $s \in \mathcal{S} \setminus \{1\}$.
	This function is already used in the simulations in \citet[Appendix A]{DGJ_2021}.
	
	\item
	S-shaped: \
	Second, the regression function follows an S-shaped form $p_s(x) = \left( 1+((1-x)/x)^{1+s} \right)^{-1}$, where $s \in \mathcal{S}$ pronounces the curves for larger values of $s$.
	
	\item   
	Kink: \ 
	Third, $p_s(x)$ linearly interpolates the points $(0,0), (0.2 + 0.8s, 0.2)$ and $(1,1)$ for $s \in \mathcal{S}$, resulting in a kink at the point $(0.2 + 0.8s, 0.2)$ for all $s > 0$.
	
	\item  
	Step: \  
	Fourth, we use a step function with $s^\star \in \{5,6,\dots,14\}$ equidistant steps in the unit interval.
	It is given by $p_s(x) =  \big\{ \lfloor s^\star x  \rfloor + \mathds{1}(x \not= 1) \big\} / s^\star$, where $s^\star = 15 - 10s$ and $s \in \mathcal{S} \setminus \{0\}$.
	It doesn't nest the diagonal, but the deviation from it increases with $s$.
	
	\item  
	Wave: \
	Fifth, we use the cubic function $p_s(x) = 0.5 - (2s-1)(x-0.5) + 8s (x-0.5)^3$ that violates the isotonicity assumption in  \eqref{eq:isotonicity} for any $s > 0.5$. 
\end{enumerate}

Figure \ref{fig:coverage_unif} presents the average coverage rates for a range of sample sizes between 512 and 32\,768.
We use the raw method for our bands in \eqref{eq:upper_bound} and \eqref{eq:lower_bound} as the raw bands are contained in the non-crossing ones.
We find that, as predicted by the theory, our confidence bands have conservative coverage throughout all isotonic simulation setups and sample sizes.
We observe coverage rates above 0.998 with the majority of 162 out of the 192 displayed coverage values being exactly one.
The unreported non-crossing bands differ from the raw ones in less than one out of a hundred thousand instances.
These deviations occur exclusively for large values of $s$ in the Step specification within constancy regions of the function $p$.
As expected, our method as well as the bands of \cite{Yang2019} have incorrect coverage rates for the values $s > 0.5$ that violate isotonicity in the Wave specification when the sample size increases.
The coverage rates of the \cite{Yang2019} bands are still larger as these are shown to be wider by Theorem \ref{thm:iso}. 	

\begin{figure}[tb]
	\centering
	\includegraphics[width=\textwidth]{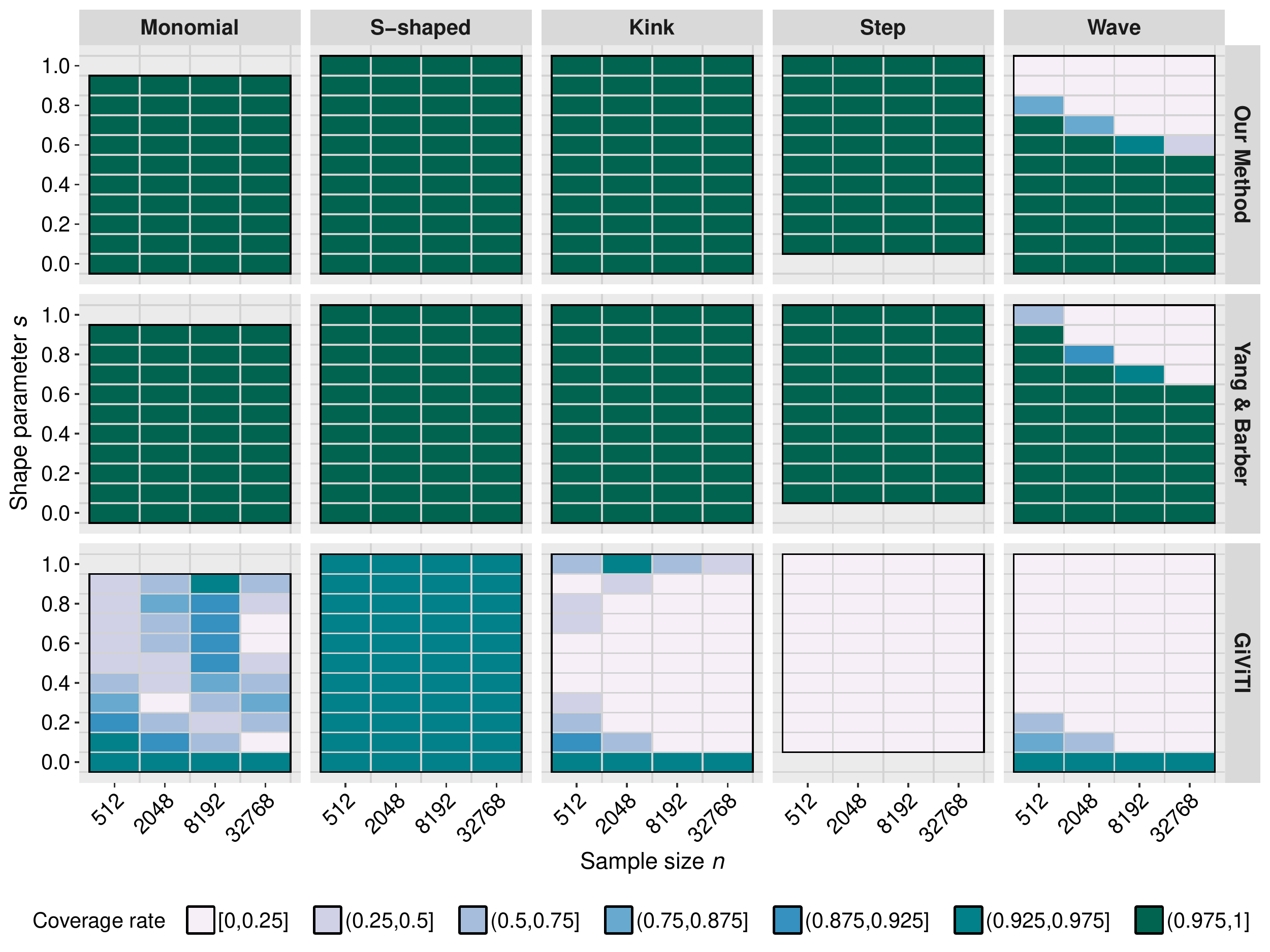}
	\caption{Empirical coverage rates of our confidence bands, the bands of \citet{Yang2019}, and the GiVitI bands for $1-\alpha = 0.95$, averaged over all covariate values  for the five specifications of the regression function $p_s(\cdot)$, different shape values $s$ and a range of sample sizes $n$.
		For our bands, we use the raw method in \eqref{eq:upper_bound} and \eqref{eq:lower_bound}, together with rounding in \eqref{eqn:Rounding} with $K=10^3$.
		The choices $s=1$ in the Monomial, and $s=0$ in Step specification are not defined.}
	\label{fig:coverage_unif}
\end{figure}

The parametric bands of \cite{Nattino2014} rarely achieve correct coverage rates unless in the cases $s=0$ and for the S-shaped regression functions.
This can be explained as these bands are based on the assumption of a certain parametric form of $p_s(x)$, which is rarely satisfied. 
The results get worse for the non-smooth, the discontinuous and the non-isotonic specifications.

\begin{figure}[tb]
	\centering
	\includegraphics[width=\textwidth]{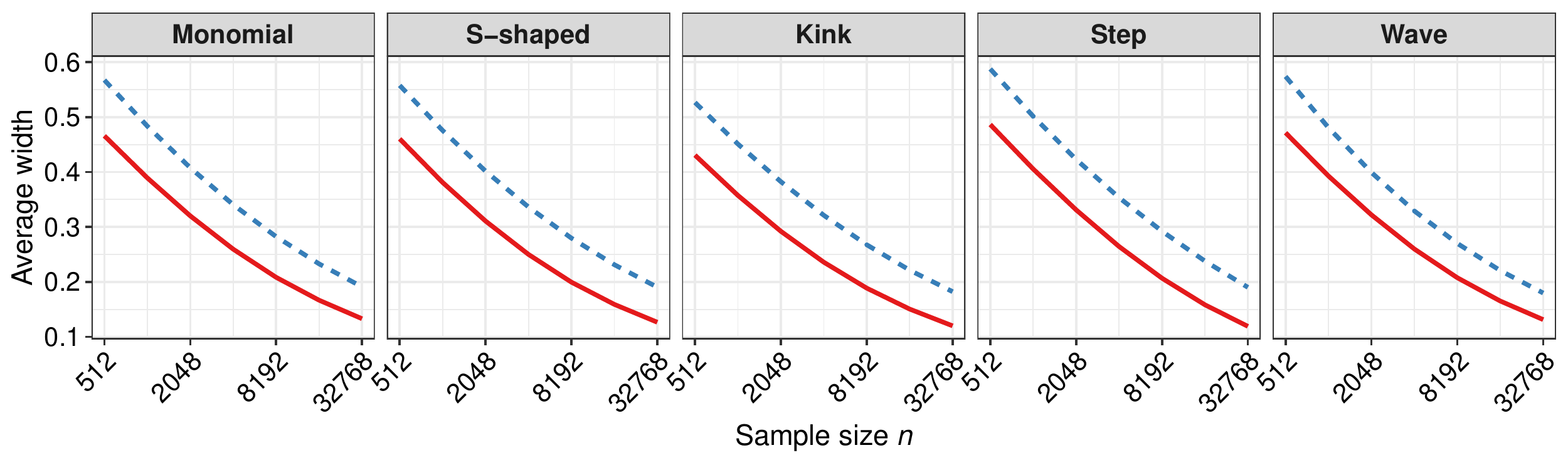}
	\includegraphics[width=\textwidth]{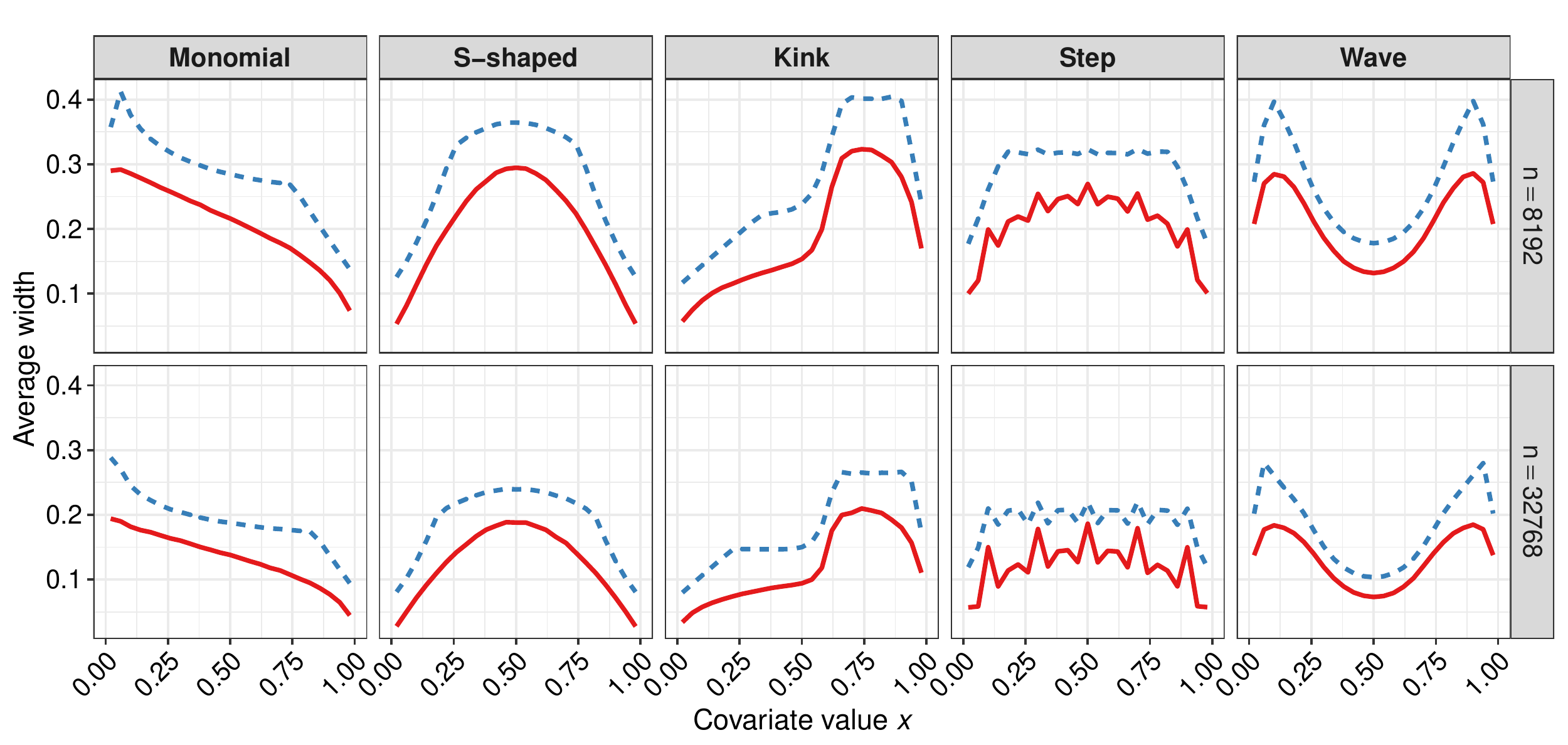}
	\caption{Top: Average widths of the $95\%$ confidence bands by sample size for each of the five specifications of $p_s(x)$ given in the main text for a fixed value $s=0.5$.
		Bottom: Average widths by covariate value $x$ for two sample sizes.
		In both panels, the solid red line corresponds to our bands based on the non-crossing method in \eqref{eq:NonCrossingBands} together with rounding in \eqref{eqn:Rounding} with $K=10^3$, and the dashed blue line corresponds to the \cite{Yang2019} bands.}
	\label{fig:width_unif}
\end{figure}

Figure \ref{fig:width_unif} displays the average widths of our and the \cite{Yang2019} bands.
We present the theoretically wider non-crossing bands instead of the raw versions thereof.
Their average widths is however non-distinguishable in these displays.
We fix a medium degree of miscalibration $s=0.5$.
The upper plot panel displays the widths averaged over all simulation runs and values $x \in [0,1]$ depending on the sample size $n$.
We find that the size of both bands shrinks with $n$ and that we can reconfirm the ordering established in Theorem \ref{thm:iso}.
We further see that our bands are only narrow enough for practical use in large samples.
The relative gain in width of our bands is the highest for large sample sizes, exactly for which we propose the application of our method for calibration validation.
It is worth noting that the bands of \cite{Yang2019} are more generally valid than for the special case of binary observations.

The lower plot panel shows the widths averaged over the simulation replications, but depending on the values $x \in [0,1]$ for two selected sample sizes.
It shows that the relative gains in width upon the bands of \cite{Yang2019} are particularly pronounced close to the edges of the unit interval.
In applications to calibration assessment, these regions of predicted probabilities close to zero or one are often of the highest interest as for example in the subsequent section assessing the goodness-of-fit of low birth weight probability predictions.

\section{Application: Predicting low birth weight probabilities}
\label{sec:ApplicationLBW}

We apply our confidence bands to assess calibration of three binary regression specifications predicting the probability of a fetus having a low birth weight, defined as weighting less than 2500 grams at birth \citep{def_who}.
Recall that in the setting of calibration assessment, we call the function $p$ the calibration curve and our confidence bands are denoted as calibration bands. 
This follows the interpretation that for an event predicted with probability $x$, $p(x)$ denotes its true but unknown event probability.
Perfectly calibrated predictions entail a calibration curve matching the diagonal line, $d(x) = x$.
As the calibration band is a simultaneously valid confidence band for $p$, deviations of the calibration band from the diagonal line imply significantly miscalibrated predictions in this region.

\begin{figure}[tb]
	\centering
	\includegraphics[width=\textwidth]{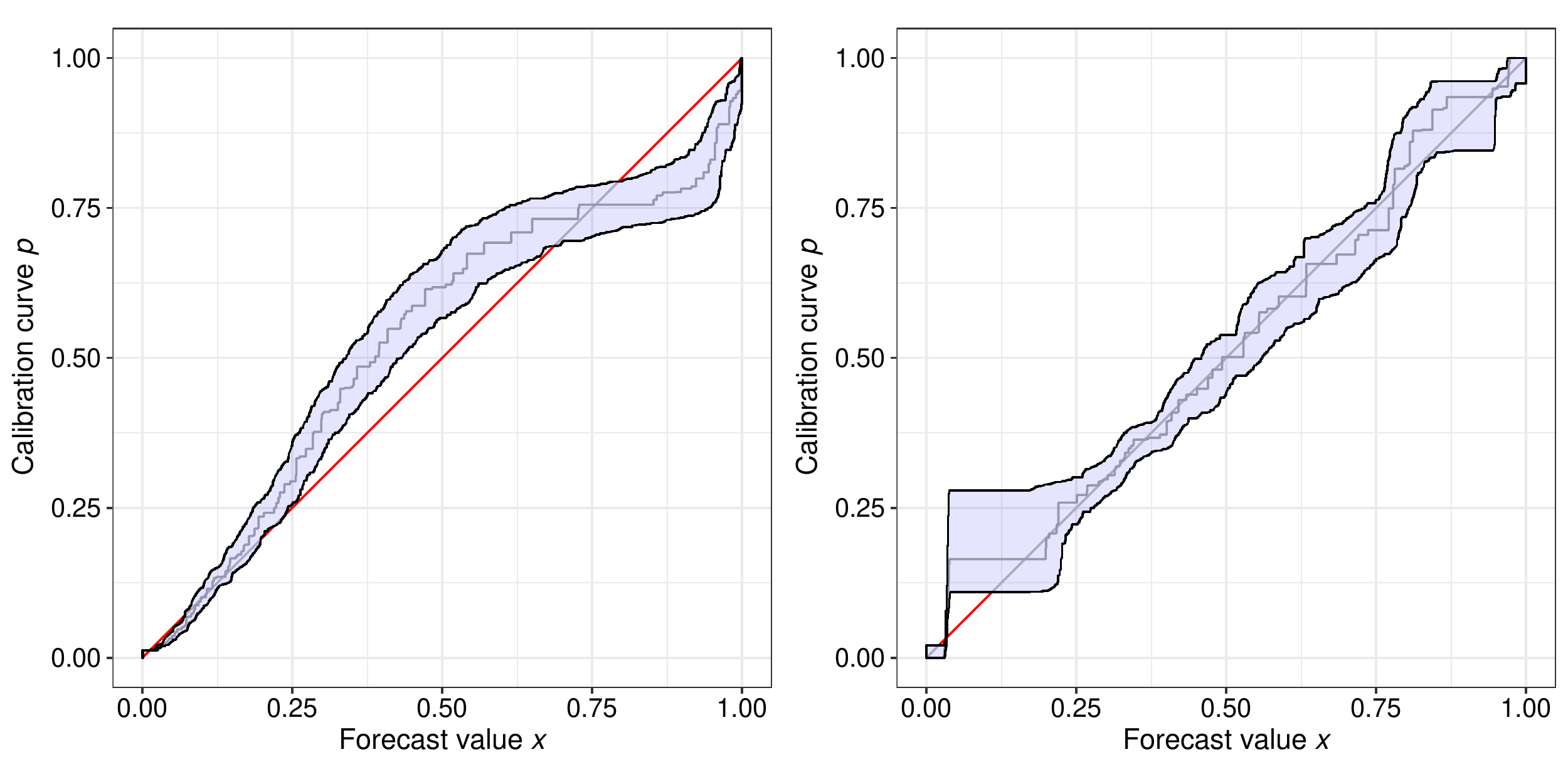}
	\caption{Calibration bands for the second model specification on the left and for the third specification on the right for the low birth weight application.
		The blue band denotes the calibration band based on the non-crossing method in \eqref{eq:NonCrossingBands} together with the rounding in \eqref{eqn:Rounding} with $K=10^3$, and the grey step function shows the isotonic regression estimate. The diagonal line is given in red color whenever it is not contained in the calibration band.}
	\label{fig:appl_lbw}
\end{figure}

We use U.S.~Natality Data from the \citet{Natality2017}, which provides demographic and health data for 3\,864\,754 births in the year 2017.
For the data set at hand, a low birth weight is observed in 8.1\% of the cases. 
We estimate three binary regression models by maximum likelihood on the same randomly drawn subset that contains all but 1\,000\,000 observations that we leave for external model validation.
All three models contain standard risk factors such as the mother's age, body mass index and smoking behavior but they differ as follows.
The first model uses a probit link function, and the explanatory variable week of gestation is categorized into four left-closed and right-open intervals with lower interval limits of 0, 28, 32 and 37 weeks, pertaining to the standard definitions of the World Health Organization of extremely, very, moderate and non preterm \citep{Quinn2016PretermDef}.
Through this categorization, the model specification can capture the week of gestation in a non-linear fashion. 
In contrast, the second model uses the week of gestation as a continuous explanatory variable and the third specification employs the cauchit instead of the probit link function, which is known to produce less confident predictions close to zero and one \citep{Koenker2009}.
Additional details of the model specifications are given in the Supplementary Material.

The classical Hosmer-Lemeshow test rejects perfect calibration of all three models with p-values of essentially zero for both, internal and external model validation, which leaves an applied researcher without any useful conclusions on model calibration.
We show our calibration bands based on the non-crossing method with rounding to three digits, i.e., $K=10^3$ in \eqref{eqn:Rounding}, with a confidence level of $1-\alpha = 95\%$ for the first model in Figure \ref{fig:ApplicationIntro} and for the other two models in Figure \ref{fig:appl_lbw}.
We constantly extrapolate the bands on the unit interval which preserves their theoretical coverage guarantees as discussed after Theorem \ref{thm:Coverage}.
Figure S3 in the Supplementary Material illustrates that the bands of \citet{Yang2019} are considerably wider in this application.

Recall that the validity of our bands relies on the isotonicity assumption of $p$, which we test for as detailed on in Appendix \ref{app:test_iso}.
The test only rejects isotonicity at the $5\%$ level  for the second model specification displayed on the left side of Figure \ref{fig:appl_lbw} with a crossing of the lower and upper bounds for probability predictions between $0.1\%$ and $2.7\%$.
Hence, we can directly reject calibration for this model in the critical area of small predictions and furthermore, the remaining calibration band has to be interpreted carefully for this model.
As the simulations in Appendix \ref{app:test_iso} show that the isotonicity test can even detect slight violations of isotonicity with high power for much smaller sizes as considered in this application, type II test errors are barely a problem here and we can be  confident about the isotonicity assumption for the other two model specifications.

For the first model, the calibration band encompasses the diagonal line for all forecast values, meaning that we cannot reject the null hypothesis of perfect calibration $p(x) = x$ at the $5\%$ level.
More importantly, we are $95\%$ certain that the true calibration curve lies within the the band at any point $x \in [0,1]$, implying that we are confident that the model is at least as well calibrated as specified by the band.
This is especially notable in the important region of predictions below $10\%$ in the magnified right panel of  Figure \ref{fig:ApplicationIntro}, where the calibration bands are remarkably close to the diagonal implying a particularly well calibrated model.
E.g., we can conclude that for a prediction of $x=5\%$, a low birth weight occurs with a probability between $4.6\%$ and $6.7\%$.

In contrast, we reject calibration for both, the second and third model specifications as shown in Figure \ref{fig:appl_lbw}.
However, these bands are much more informative than a simple test rejection as they directly show the exact form of model miscalibration.
For the second model specification, we can conclude that the predicted probabilities are particularly miscalibrated for the non-isotonic region discussed above and for values larger than $20\%$.
The third specification entails miscalibrated probabilities for predictions below $10\%$ that are presumably of the highest importance for medical decision making.
Finally notice that the wide bands for the third model specification between predicted probabilities of $5\%$ and $20\%$ are caused by little predictions in this interval.

\section*{Acknowledgement}
T.~Dimitriadis gratefully acknowledges financial support from the German Research Foundation (DFG) through grant number 502572912.
A.~Henzi and J.~Ziegel gratefully acknowledge financial support from the Swiss National Science Foundation.

\section*{Supplementary material}
The Supplementary Material further illustrates the rounding method in simulations, gives details on the low birth weight application and contains additional proofs.

\appendix

\section{Detecting and quantifying non-isotonicity} 
\label{app:test_iso}

\renewcommand{\thetheorem}{A\arabic{theorem}}   
\renewcommand{\theequation}{A\arabic{equation}}   

\setcounter{theorem}{0}
\setcounter{equation}{0}

The regression function $p$ could violate isotonicity in \eqref{eq:isotonicity}. Then, its non-isotonicity can be quantified by 
\[
\gamma(p) := \sup_{x \le y} \{p(x) - p(y)\} \ge 0 .
\]
The derivation of our confidence band $(L^{\alpha,\mathrm{raw}}, U^{\alpha,\mathrm{raw}})$ can be adapted as follows: For any index pair $(j,k) \in \mathcal{J}$ and $\delta \in (0,1)$, we know that
\begin{equation}
	\label{eq:single.bounds.v2}
	\mathbb{P}[\min\{p_j,\ldots,p_k\} \le u^\delta(Z_{jk},n_{jk})] \ge 1 - \delta,
	\quad
	\mathbb{P}[\max\{p_j,\ldots,p_k\} \ge \ell^\delta(Z_{jk},n_{jk})] \ge 1 - \delta .
\end{equation}
But the definition of $\gamma(p)$ implies that
\[
p(x) \le \min\{p_j,\ldots,p_k\} + \gamma(p) \ \ \forall \ x \le x_j,
\qquad
p(x) \ge \max\{p_j,\ldots,p_k\} - \gamma(p) \ \ \forall \ x \ge x_k .
\]
Consequently, one can complement Theorem~\ref{thm:Coverage} with the following result:

\begin{theorem}
	\label{thm:CoverageExtended}
	Let $(L^{\alpha, \mathrm{raw}},U^{\alpha, \mathrm{raw}})$ be defined as in Theorem~\ref{thm:Coverage} . Then for any regression function $p$,
	\[
	\mathbb{P}\{L^{\alpha,\mathrm{raw}} - \gamma(p)
	\le p \le U^{\alpha,\mathrm{raw}} + \gamma(p) \}
	\ge 1 - \alpha .
	\]
\end{theorem}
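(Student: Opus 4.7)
The approach is to mimic the proof of Theorem~\ref{thm:Coverage} with a single modification: the step that invoked isotonicity to conclude ``$p(x) \le p_j$ for $x \le x_j$'' is replaced by a two-sided bound that uses only the definition of $\gamma(p)$. Since $\gamma(p)$ is a deterministic constant, it contributes no probabilistic cost, and the union-bound structure of the original argument carries over unchanged.

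First I would record the stochastic-domination facts already asserted in \eqref{eq:single.bounds.v2}. Because the $Y_i$ are independent Bernoullis with parameters $p_i$, coupling each $Y_i$ to a Bernoulli with parameter $\min_{j \le l \le k} p_l$ (respectively, $\max_{j \le l \le k} p_l$) shows that $Z_{jk}$ is stochastically larger (respectively, smaller) than a $\mathrm{Bin}(n_{jk}, \min\{p_j,\ldots,p_k\})$ (respectively, $\mathrm{Bin}(n_{jk}, \max\{p_j,\ldots,p_k\})$) random variable. The Clopper--Pearson construction of $u^\delta$ and $\ell^\delta$ then yields \eqref{eq:single.bounds.v2}; this is precisely the content of Lemma~\ref{lem:hoeffding} applied to the relevant extremal parameter.

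Next I would turn \eqref{eq:single.bounds.v2} into pointwise control of $p$ itself. For any $x \le x_j$ and any $i \in \{j,\ldots,k\}$ we have $x \le x_i$, so by definition of $\gamma(p)$,
\[
p(x) - p_i \le \gamma(p), \qquad \text{hence} \qquad p(x) \le \min\{p_j,\ldots,p_k\} + \gamma(p).
\]
The symmetric argument for $x \ge x_k$ gives $p(x) \ge \max\{p_j,\ldots,p_k\} - \gamma(p)$. Combining with \eqref{eq:single.bounds.v2} at $\delta = \alpha/(N^2+N)$ shows, for each fixed $(j,k) \in \mathcal{J}$, that with probability at least $1 - \delta$,
\[
p(x) \le u^\delta(Z_{jk}, n_{jk}) + \gamma(p) \ \ \forall\, x \le x_j, \qquad p(x) \ge \ell^\delta(Z_{jk}, n_{jk}) - \gamma(p) \ \ \forall\, x \ge x_k.
\]

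Finally I would union-bound the $2|\mathcal{J}| = N^2 + N$ events across all $(j,k) \in \mathcal{J}$, consuming the full error budget $\alpha$. On the resulting event, taking the infimum over the admissible pairs in \eqref{eq:upper_bound} and the supremum in \eqref{eq:lower_bound} gives $p \le U^{\alpha,\mathrm{raw}} + \gamma(p)$ and $p \ge L^{\alpha,\mathrm{raw}} - \gamma(p)$ pointwise on $\real$, using that $\gamma(p)$ is nonrandom and factors out of the inf/sup, and that the conventions $U^{\alpha,\mathrm{raw}} = 1$ on $(x_n,\infty)$ and $L^{\alpha,\mathrm{raw}} = 0$ on $(-\infty,x_1)$ handle the tail regions trivially. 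The only genuinely nontrivial ingredient is \eqref{eq:single.bounds.v2}, and that is already in hand; the remainder is a clean bookkeeping extension of Theorem~\ref{thm:Coverage}.
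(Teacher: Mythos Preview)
Your argument is correct and follows exactly the route the paper takes in Appendix~\ref{app:test_iso}: establish \eqref{eq:single.bounds.v2} via stochastic domination, use the definition of $\gamma(p)$ to pass from $\min\{p_j,\ldots,p_k\}$ and $\max\{p_j,\ldots,p_k\}$ to $p(x)$, and then union-bound over the $2|\mathcal{J}|=N^2+N$ one-sided events. One phrasing nit: in your middle display you write that \emph{both} inequalities hold with probability at least $1-\delta$, whereas each holds separately with that probability; your final union-bound paragraph handles the bookkeeping correctly, so just tighten the intermediate wording.
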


This result has two implications: First, a p-value for the null hypothesis that $p$ is isotonic is given by the supremum of all $\alpha \in (0,1)$ such that $L^{\alpha,\mathrm{raw}} \le U^{\alpha,\mathrm{raw}}$ pointwise. 
Second, for a fixed $\alpha \in (0,1)$ let $\hat{\gamma}_{\alpha} \ge 0$ be the infimum of all numbers $\gamma \ge 0$ such that $L^{\alpha,\mathrm{raw}} - \gamma \le U^{\alpha,\mathrm{raw}} + \gamma$. In other words, $\hat{\gamma}_\alpha$ equals $\sup_{x \in \mathbb{R}} \{L^{\alpha,\mathrm{raw}}(x) - U^{\alpha,\mathrm{raw}}(x)\}^+ / 2$. Then $\hat{\gamma}_\alpha$ is a lower $(1 - \alpha)$-confidence bound for $\gamma(p)$.

\begin{table}
		\centering
		\caption{Rejection rates of the isotonicity test}
		\begin{tabular}{rrrrrrrr}
			\toprule
			\multicolumn{1}{c}{ } & \multicolumn{7}{c}{Sample size $n$} \\
			\cmidrule(l{3pt}r{3pt}){2-8}
			$s$ & 512 & 1024 & 2048 & 4096 & 8192 & 16384 & 32768\\
			\midrule
			0.5 & 0.00 & 0.00 & 0.00 & 0.00 & 0.00 & 0.00 & 0.00\\
			0.6 & 0.00 & 0.00 & 0.00 & 0.00 & 0.00 & 0.00 & 0.00\\
			0.7 & 0.00 & 0.00 & 0.00 & 0.00 & 0.01 & 0.16 & 0.89\\
			0.8 & 0.00 & 0.00 & 0.01 & 0.08 & 0.71 & 1.00 & 1.00\\
			0.9 & 0.00 & 0.01 & 0.13 & 0.85 & 1.00 & 1.00 & 1.00\\
			1.0 & 0.01 & 0.14 & 0.81 & 1.00 & 1.00 & 1.00 & 1.00\\
			\bottomrule
	\end{tabular}
	\label{tab:IsotonicityTest}
\end{table}

Table \ref{tab:IsotonicityTest} illustrates the isotonicity test's performance using the Wave specification of Section \ref{sec:Simulations} for $s \ge 0.5$, where $s=0.5$ entails an isotonic function $p$, and the choices $s > 0.5$ imply increasing degrees of non-isotonicity, also see Figure \ref{fig:p_alternatives}.
We find a conservative test size of zero for $s=0.5$ and increasing power with both, $s$ and $n$.
For the largest sample sizes, we can detect mild misspecifications with high power, showing that type II errors are barely a concern for the sample size considered in our application.

\section{Proofs and Technical Lemmas}

\renewcommand{\thetheorem}{B\arabic{theorem}}   
\renewcommand{\thelemma}{B\arabic{lemma}}   
\renewcommand{\thecorollary}{B\arabic{corollary}}   
\renewcommand{\theproposition}{B\arabic{proposition}}   
\renewcommand{\theequation}{B\arabic{equation}}   

\setcounter{theorem}{0}
\setcounter{lemma}{0}
\setcounter{corollary}{0}
\setcounter{proposition}{0}
\setcounter{equation}{0}

\begin{lemma} 
	\label{lem:hoeffding}
	Let $Y_1, \dots, Y_m$ be independent Bernoulli variables with expectations $p_1 \leq \dots \leq p_m$, and let $Z = Y_1 + \dots + Y_m$. Then for any $\delta \in (0,1)$,
	\[
	\Prob\{p_1 \leq u^\delta(Z, m)\} \geq 1 - \delta
	\quad\text{and}\quad
	\Prob\{p_m \geq \ell^\delta(Z, m)\} \geq 1 - \delta .
	\]
\end{lemma}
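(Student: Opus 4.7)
The plan is to deduce both inequalities by combining a monotone coupling of the Bernoulli vector with a standard Clopper--Pearson property, together with the fact that $u^\delta(\cdot,m)$ and $\ell^\delta(\cdot,m)$ are nondecreasing functions of their first argument.

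First I would introduce independent random variables $U_1,\ldots,U_m \sim \mathrm{U}[0,1]$ and, on this common probability space, define
\[
Y_i = \mathbf{1}\{U_i \le p_i\}, \qquad B_i = \mathbf{1}\{U_i \le p_1\}, \qquad B_i' = \mathbf{1}\{U_i \le p_m\}.
\]
Since $p_1 \le p_i \le p_m$, this gives $B_i \le Y_i \le B_i'$ pointwise, while the marginal distributions are as required: $Y_i \sim \mathrm{Bern}(p_i)$ and by independence $B := \sum_i B_i \sim \mathrm{Bin}(m,p_1)$, $B' := \sum_i B_i' \sim \mathrm{Bin}(m,p_m)$. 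Summing yields $B \le Z \le B'$ almost surely. Hence $Z$ is stochastically bounded between two binomials, which is what the informal statement before \eqref{eq:single.bounds} asserts; the coupling makes this rigorous in one line.

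Next I would record the monotonicity of the Clopper--Pearson bounds in their first argument. For fixed $m$ and $\xi$, the function $z \mapsto \mathrm{pbin}(z,m,\xi)$ is nondecreasing, so the set $\{\xi \in [0,1] : \mathrm{pbin}(z,m,\xi) \ge \delta\}$ increases with $z$; hence $z \mapsto u^\delta(z,m)$ is nondecreasing. Similarly, $z \mapsto \mathrm{pbin}(z-1,m,\xi)$ is nondecreasing in $z$, so the set $\{\xi : \mathrm{pbin}(z-1,m,\xi) \le 1-\delta\}$ shrinks as $z$ grows, and $z \mapsto \ell^\delta(z,m)$ is also nondecreasing. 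Combined with the almost sure inequalities $B \le Z \le B'$ from the coupling, this gives
\[
u^\delta(B,m) \le u^\delta(Z,m), \qquad \ell^\delta(Z,m) \le \ell^\delta(B',m) \qquad \text{a.s.}
\]

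Finally I would invoke the standard Clopper--Pearson coverage property, which is precisely the statement
\[
\Prob\{p_1 \le u^\delta(B,m)\} \ge 1 - \delta, \qquad \Prob\{p_m \ge \ell^\delta(B',m)\} \ge 1 - \delta,
\]
as recalled in the paragraph preceding the lemma and proved in \cite{Johnson2005}. Chaining this with the inequalities in the previous step yields both of the desired bounds. I do not expect any real obstacle: the only delicate point is the monotonicity of $u^\delta$ and $\ell^\delta$ in $z$, which reduces to monotonicity of the binomial c.d.f.\ in its argument; everything else is either the coupling or a direct appeal to the known single-binomial result.
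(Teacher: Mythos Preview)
Your proof is correct and follows essentially the same route as the paper: monotonicity of $u^\delta(\cdot,m)$ and $\ell^\delta(\cdot,m)$, stochastic comparison of $Z$ with $\mathrm{Bin}(m,p_1)$ and $\mathrm{Bin}(m,p_m)$, and then the single-binomial Clopper--Pearson validity. The only cosmetic difference is that the paper invokes the stochastic ordering abstractly (citing \citealp{Shaked2007}) and rewrites the event as $\{Z \ge b\}$, whereas you realize the ordering via an explicit uniform-threshold coupling to obtain $B \le Z \le B'$ almost surely; the two are equivalent formulations of the same idea.
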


\begin{proof}[Proof of Lemma \ref{lem:hoeffding}]
	For the upper bound, note that $u^{\delta}(z,m)$ is increasing in $z$. If $b = \min\{z \in \{0, \dots, m\}: \, u^{\delta}(z, m) \geq p_1\}$, then $\Prob\{p_1 \leq u^{\delta}(Z, m)\} = \Prob(Z \geq b)$.
	By \citet[Example 1.A.25]{Shaked2007}, $Z$ is stochastically larger than $\tilde{Z}$ with binomial distribution with parameters $m$ and $p_1$, so $\Prob(Z \geq b) \geq \Prob(\tilde{Z} \geq b) \geq 1 - \delta$,
	where the last inequality follows from the validity of the Clopper-Pearson confidence bounds. The proof for the lower bound is similar.
\end{proof}

The proof of Theorem~\ref{thm:iso} uses standard results for isotonic least squares regression and the following inequalities of \citet[Theorem~1]{Hoeffding1963}.

\begin{lemma}
	\label{lem:hoeffding.2}
	Let $Y_1,Y_2,\ldots,Y_m$ be independent random variables with values in $[0,1]$ and expectations $p_1,p_2,\ldots,p_m$. Suppose that $q = m^{-1} \sum_{i=1}^m p_i \in (0,1)$, and set $\hat{q} = m^{-1} \sum_{i=1}^m Y_i$. Then for arbitrary $r \in [0,1]$,
	\begin{align*}
		\Prob(\hat{q} \le r)
		& \le \exp\{- m K(r,q)\} \le \exp\{- 2m (r - q)^2\}
		\ \ \text{if} \ r \le q , \\
		\Prob(\hat{q} \ge r)
		& \le \exp\{- m K(r,q)) \le \exp\{- 2m (r - q)^2\}
		\ \ \text{if} \ r \ge q ,
	\end{align*}
	where $K(r,q) := r \log(r/q) + (1 - r)\log[(1 - r)/(1 - q)]$.
\end{lemma}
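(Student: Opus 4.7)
The plan is to reproduce the classical Chernoff-style argument behind \citet[Theorem~1]{Hoeffding1963}, since both tail bounds follow from the same exponential Markov technique, and the quadratic bound $K(r,q) \ge 2(r-q)^2$ is a separate one-variable calculus exercise.

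First I would treat the upper tail. For $r \ge q$ and $t > 0$, the exponential Markov inequality combined with independence gives
\[
\Prob(\hat q \ge r) = \Prob\Big(e^{t\sum_i Y_i} \ge e^{tmr}\Big) \le e^{-tmr} \prod_{i=1}^m \E e^{tY_i}.
\]
Convexity of $x \mapsto e^{tx}$ on $[0,1]$ yields the linear majorant $e^{tY_i} \le 1 - Y_i + Y_i e^t$, whence $\E e^{tY_i} \le 1 + p_i(e^t - 1)$. The arithmetic--geometric mean inequality applied to the $m$ nonnegative factors then bounds their product by
\[
\prod_{i=1}^m \bigl(1 + p_i(e^t - 1)\bigr) \le \Big(\tfrac{1}{m}\sum_{i=1}^m \bigl(1 + p_i(e^t - 1)\bigr)\Big)^m = \bigl(1 + q(e^t - 1)\bigr)^m.
\]
Optimizing the resulting bound $e^{-tmr}(1 + q(e^t-1))^m$ in $t > 0$ at the minimizer $t^\ast = \log\{r(1-q)/(q(1-r))\}$ (valid when $r \in (q,1)$) collapses the right-hand side to $\exp\{-mK(r,q)\}$, with the boundary cases $r = q$ and $r = 1$ handled directly.

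Second, the lower-tail bound follows by applying the upper-tail result to the variables $\tilde Y_i := 1 - Y_i$, which take values in $[0,1]$ with means $1 - p_i$ and average $1 - q$, with threshold $1 - r \ge 1 - q$, and using the symmetry $K(1-r,1-q) = K(r,q)$. Finally, to obtain the universal quadratic bound, I would fix $q \in (0,1)$ and set $f(r) := K(r,q) - 2(r-q)^2$ on $(0,1)$. A direct computation gives $f(q) = 0$, $f'(q) = 0$, and $f''(r) = 1/\{r(1-r)\} - 4 \ge 0$ since $r(1-r) \le 1/4$; hence $f \ge 0$ on $(0,1)$, which is the desired inequality $K(r,q) \ge 2(r-q)^2$, extending to the endpoints by continuity.

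The only nontrivial step is the AM--GM reduction from heterogeneous means $p_i$ to their average $q$, which is precisely the device by which Hoeffding extends the Chernoff bound for i.i.d.\ Bernoulli variables to independent $[0,1]$-valued summands with possibly unequal means; the remaining optimization and convexity verifications are routine.
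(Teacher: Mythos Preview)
Your argument is correct and is precisely the classical Chernoff--Hoeffding derivation: exponential Markov, the linear upper bound on $e^{tY_i}$ from convexity, the AM--GM reduction to the common mean $q$, optimization in $t$, the symmetry $Y_i \mapsto 1 - Y_i$ for the lower tail, and the convexity computation $K''(r,q) = \{r(1-r)\}^{-1} \ge 4$ for the quadratic minorant. The paper itself does not supply a proof of this lemma; it simply records the inequalities and attributes them to \citet[Theorem~1]{Hoeffding1963}, so your write-up effectively fills in what the paper leaves as a citation.
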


\begin{corollary}
	\label{cor:hoeffding.2}
	For integers $m \ge 1$, $z \in \{0,1,\ldots,m\}$ and any number $\delta \in (0,1)$,
	\begin{align*}
		u^\delta(z,m) &
		\le \max \bigl\{ \xi \in [ \hat{q},1 ] :
		K(\hat{q},\xi) \le \log(1/\delta)/m \bigr\}
		\le \hat{q} + \sqrt{\log(1/\delta)/(2m)} , \\
		\ell^\delta(z,m) &
		\ge \min \bigl\{ \xi \in [0,\hat{q}] : K(\hat{q},\xi) \le \log(1/\delta)/m \bigr\}
		\ge \hat{q} - \sqrt{\log(1/\delta)/(2m)} ,
	\end{align*}
	where $\hat{q} = z/m$.
\end{corollary}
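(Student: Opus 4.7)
The plan is to combine the defining property of $u^\delta$ and $\ell^\delta$ with Hoeffding's inequalities from Lemma~\ref{lem:hoeffding.2}, and then reduce the $K$-based bound to the quadratic one via the elementary estimate $K(r,q) \ge 2(r-q)^2$ (which is the second inequality already contained in Lemma~\ref{lem:hoeffding.2}).

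For the upper bound, I would set $\xi^\star := u^\delta(z,m)$ and $\hat q := z/m$, and split into two cases. If $\xi^\star \le \hat q$, the claimed inequality is immediate because the set on the right is non-empty (it contains $\hat q$, as $K(\hat q,\hat q) = 0$), so its max is at least $\hat q \ge \xi^\star$. Otherwise, $\hat q < \xi^\star \le 1$. Using that $\xi \mapsto \mathrm{pbin}(z,m,\xi)$ is continuous and strictly decreasing on $(0,1)$ with $\mathrm{pbin}(z,m,1) = 0$ when $z < m$, the definition of $u^\delta$ forces $\mathrm{pbin}(z,m,\xi^\star) = \delta$ whenever $\xi^\star < 1$; the case $\xi^\star = 1$ occurs only for $z=m$, in which case $\hat q = 1$ and the bound is trivially an equality. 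Writing $\hat q_m := Z/m$ for $Z \sim \mathrm{Bin}(m,\xi^\star)$, the equality becomes $\Prob_{\xi^\star}(\hat q_m \le \hat q) = \delta$. Since $\hat q < \xi^\star = \mathbb{E}_{\xi^\star}\hat q_m$, the first inequality of Lemma~\ref{lem:hoeffding.2} applied to these Bernoulli variables yields $\delta \le \exp(-m K(\hat q,\xi^\star))$, i.e.\ $K(\hat q,\xi^\star) \le \log(1/\delta)/m$. Thus $\xi^\star$ lies in the set $\{\xi \in [\hat q,1]: K(\hat q,\xi) \le \log(1/\delta)/m\}$, so $u^\delta(z,m) = \xi^\star$ is bounded by its maximum.

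The argument for $\ell^\delta$ is the mirror image: for $\xi^\star := \ell^\delta(z,m)$, the definition gives $\mathrm{pbin}(z-1,m,\xi^\star) = 1 - \delta$, equivalently $\Prob_{\xi^\star}(\hat q_m \ge \hat q) = \delta$; then the second inequality of Lemma~\ref{lem:hoeffding.2} (applicable when $\xi^\star \le \hat q$, with the case $\xi^\star > \hat q$ handled trivially) produces $K(\hat q,\xi^\star) \le \log(1/\delta)/m$, and the boundary case $z=0$ gives $\hat q = 0$ and the equality directly. Having established both $K$-based bounds, the quadratic consequences follow immediately by invoking the inequality $K(r,q) \ge 2(r-q)^2$ stated in Lemma~\ref{lem:hoeffding.2}: any $\xi$ with $K(\hat q,\xi) \le \log(1/\delta)/m$ satisfies $|\xi - \hat q| \le \sqrt{\log(1/\delta)/(2m)}$.

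The main (minor) obstacle is the bookkeeping at the boundaries $\xi^\star \in \{0,1\}$ and $z \in \{0,m\}$, where the characterizations $\mathrm{pbin}(z,m,\xi^\star) = \delta$ or $\mathrm{pbin}(z-1,m,\xi^\star)=1-\delta$ can fail; these have to be dispatched separately, but in each such case either the bound is trivial or the set in the max/min collapses to a single point at which the inequality becomes an equality. Everything else is a direct substitution into Lemma~\ref{lem:hoeffding.2}.
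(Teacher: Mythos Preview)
Your argument is correct and is precisely the derivation the paper has in mind: the result is stated as a corollary of Lemma~\ref{lem:hoeffding.2} without a separate proof, and your route---read off $\mathrm{pbin}(z,m,\xi^\star)=\delta$ (resp.\ $\mathrm{pbin}(z-1,m,\xi^\star)=1-\delta$) from the definition of the Clopper--Pearson bound, apply Hoeffding's exponential inequality with $q=\xi^\star$ and $r=\hat q$ to obtain $K(\hat q,\xi^\star)\le \log(1/\delta)/m$, and then invoke $K(r,q)\ge 2(r-q)^2$ for the quadratic consequence---is exactly that. Your handling of the boundary cases $z\in\{0,m\}$ and $\xi^\star\in\{0,1\}$ is also clean.
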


In addition, the proof of Theorem~\ref{thm:iso} makes use of the following proposition which is of independent interest, since it implies a more efficient method for computing the bounds of \cite{Yang2019}.

\begin{proposition}
	\label{prop:isobounds}
	For an arbitrary observation vector $\mathcal{Y} \in \real^n$, let $\hat{p} \colon [0,1] \to \real$ be an increasing function minimizing $\sum_{i=1}^n \{Y_i - \hat{p}(x_i)\}^2$. For some $\tau > 0$ and any index $i = 1, \dots, n$, let
	\[
	U_i
	= \min_{(j,k)\in \mathcal{J}\colon x_j \ge x_i}
	\Bigl( \frac{Z_{jk}^{\mathrm{iso}}}{n_{jk}}
	+ \frac{\tau}{\sqrt{n_{jk}}} \Bigr) ,
	\quad
	L_i
	= \max_{(j,k)\in \mathcal{J}\colon x_k \le x_i}
	\Bigl( \frac{Z_{jk}^{\mathrm{iso}}}{n_{jk}}
	- \frac{\tau}{\sqrt{n_{jk}}} \Bigr) .
	\]
	Then, the minimum for $U_i$ is attained at some $(j,k) \in \mathcal{J}$ such that $j = \min(s\colon x_s\ge x_i)$ and $\hat{p}(x_k) < \hat{p}(x_{k+1})$ or $k = n$. The maximum for $L_i$ is attained at some $(j,k) \in \mathcal{J}$ such that $\hat{p}(x_{j-1}) < \hat{p}(x_j)$ or $j = 1$ and $k = \max(s\colon x_s \le x_i)$.
\end{proposition}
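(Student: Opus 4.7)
My plan is to prove the statement for $U_i$; the statement for $L_i$ then follows by the symmetric argument, with the roles of $j$ and $k$ reversed and right endpoints of blocks replaced by left endpoints. I will split the optimization over $(j,k) \in \mathcal{J}$ with $x_j \geq x_i$ into two independent claims: (a) the optimal first index is $j_0 := \min\{s : x_s \geq x_i\}$, and (b) with $j = j_0$ fixed, the optimal second index can be chosen to satisfy $\hat{p}(x_k) < \hat{p}(x_{k+1})$ or $k = n$.

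Part (a) is a direct monotonicity comparison. For any admissible $(j,k)$ with $j > j_0$, compare to $(j_0, k)$: the penalty $\tau/\sqrt{n_{jk}}$ decreases because $n_{j_0 k} > n_{jk}$, and the average $Z_{jk}^{\mathrm{iso}}/n_{jk}$ does not increase because isotonicity of $\hat{p}$ forces the prepended values $\hat{p}(x_{j_0}), \dots, \hat{p}(x_{j-1})$ to be $\leq \hat{p}(x_j) \leq Z_{jk}^{\mathrm{iso}}/n_{jk}$. Both effects align in the same direction.

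For part (b), let $B_1, \dots, B_M$ be the maximal constancy blocks of $\hat{p}$, with values $c_1 < \cdots < c_M$, and let $B_{\ell_0}$ denote the block containing $j_0$. Setting $m := k - j_0 + 1$ and exploiting that $Z_{j_0 k}^{\mathrm{iso}}$ grows linearly in $m$ with slope $c_\ell$ as $k$ traverses $B_\ell$, one immediately obtains the closed form
\[
f(m) \;=\; c_\ell + \frac{A_\ell}{m} + \frac{\tau}{\sqrt{m}},
\qquad
A_\ell \;:=\; \sum_{i = j_0}^{j_\ell - 1}\bigl[\hat{p}(x_i) - c_\ell\bigr],
\]
on each block $B_\ell$, with $A_{\ell_0} = 0$ (empty sum) and $A_\ell < 0$ for $\ell > \ell_0$. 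Its derivative has the unique zero $m^\ast = 4 A_\ell^2/\tau^2$, which a second-derivative check identifies as a local maximum. So $f$ is unimodal on $B_\ell$ (strictly decreasing when $A_\ell = 0$), and the minimum of $f$ over any subset of integers in $B_\ell$ is attained at the leftmost or rightmost member of that subset. Hence the per-block minimum over valid indices is attained at $k_\ell$, the right endpoint of $B_\ell$ (which satisfies the desired condition automatically, since $\hat{p}$-blocks are unions of $x$-tied blocks), or at the leftmost valid index $k'$ of $B_\ell$.

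The main obstacle is to rule out the second case for $\ell > \ell_0$ by producing an earlier right endpoint with no larger objective. I would compare to $k_{\ell-1}$ (which lies in $\mathcal{J}$ since $k_{\ell-1} \geq j_0$) and show $f(k_{\ell-1}) \leq f(k')$. Writing $N, N', M$ for the $m$-values of $k_{\ell-1}, k', k_\ell$ respectively and using $|A_\ell|/N = c_\ell - Z_{j_0, k_{\ell-1}}^{\mathrm{iso}}/N$, the assumption $f(k') \leq f(k_\ell)$ rearranges, after conjugating the $\sqrt{M} - \sqrt{N'}$ difference and cancelling the common factor $M - N' \geq 0$, to $|A_\ell| \geq \tau \sqrt{N' M}/(\sqrt{N'} + \sqrt{M})$. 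The target inequality $f(k_{\ell-1}) \leq f(k')$ reduces analogously to the weaker $|A_\ell| \geq \tau \sqrt{N N'}/(\sqrt{N} + \sqrt{N'})$. Since $q \mapsto \sqrt{N' q}/(\sqrt{N'} + \sqrt{q}) = \sqrt{N'}/(1 + \sqrt{N'/q})$ is increasing in $q$ and $M \geq N' > N$, the former bound is stronger than the latter, so the implication closes and $k_{\ell-1}$ is also a minimizer. This bookkeeping, which must carefully align the $\tau$-weighted square-root differences against the chord-slope decrements, is the main hurdle.
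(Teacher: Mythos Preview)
Your argument is correct and shares the paper's core structure: part~(a) is identical, and for part~(b) both you and the paper exploit that on each $\hat{p}$-constancy segment the objective has the form $c_\ell + A_\ell/m + \tau/\sqrt{m}$, whose minimum over any range of $m$-values lies at an endpoint. The paper phrases this as concavity in the variable $t = n_{jk}^{-1}$, you as unimodality in $m = n_{jk}$; these are equivalent observations.

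Where the two proofs diverge is in the handling of the left endpoint. You restrict attention to valid indices inside $B_\ell$, note that the per-block minimum may fall at the leftmost valid index $k'$, and then run a separate inequality argument (rearranging $f(k') \le f(k_\ell)$ and comparing the resulting bound on $|A_\ell|$ against the one needed for $f(k_{\ell-1}) \le f(k')$) to push the minimum back to the previous right endpoint. This works, but the paper sidesteps it entirely: since $A_\ell = Z^{\mathrm{iso}}_{j_0,k_{\ell-1}} - c_\ell N$, the very same formula $f(m) = c_\ell + A_\ell/m + \tau/\sqrt m$ already holds at $m = N$, i.e.\ at $k = k_{\ell-1}$. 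Hence one can apply the endpoint property directly on the \emph{extended} range $\{k_{\ell-1}\} \cup B_\ell$, whose two endpoints $k_{\ell-1}$ and $k_\ell$ are both right endpoints of $\hat{p}$-blocks. The conclusion then drops out with no further inequalities. Your route buys nothing extra and costs a page of algebra; the paper's extension trick is the cleaner way to close the argument.
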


\begin{proof}[Proof of Proposition \ref{prop:isobounds}]
	Consider the statement about $U_i$. The claim about $j$ follows from the fact that for fixed $k$, $Z_{jk}^{\mathrm{iso}}/n_{jk}$ is increasing and $n_{jk} = n - j + k$ is decreasing in $j \le k$. As to the upper index $k$, note that $U_i$ is the minimum of $u_{jk} = Z_{jk}^{\mathrm{iso}} n_{jk}^{-1} + \tau n_{jk}^{-1/2}$ over all $k \ge j = \min(s : x_s \ge x_i)$ such that $(j,k) \in \mathcal{J}$. Let $j \leq k_1 < k_2$ be indices such that $\hat{p}(x_k) = \hat{q}$ for $k = k_1+1,\dots,k_2$. Then, for $k = k_1, \dots, k_2$,
	\[
	Z_{jk}^{\mathrm{iso}}
	= Z_{jk_1}^{\mathrm{iso}} + (k - k_1) \hat{q}
	= B + n_{jk} \hat{q}
	\]
	with
	\[
	B = Z_{jk_1}^{\mathrm{iso}} - n_{jk_1} \hat{q}
	\begin{cases}
		\le 0 , \\
		= 0 & \text{if} \ \hat{p}(x_j) = \hat{q} .
	\end{cases}
	\]
	Consequently, for $k = k_1, \dots, k_2$,
	\[
	u_{jk}
	= \hat{q} + B n_{jk}^{-1} + \tau n_{jk}^{-1/2}
	\]
	is a concave function of $n_{jk}^{-1} \in [n_{jk_2}^{-1}, n_{jk_1}^{-1}]$, and it is increasing in $n_{jk}^{-1}$ if $\hat{q} = \hat{p}(x_j)$. This implies that
	\[
	u_{jk}
	\ge \begin{cases}
		\min (u_{jk_1}, u_{jk_2}) , \\
		u_{jk_2} & \text{if} \ \hat{q} = \hat{p}(x_j) .
	\end{cases}
	\]
	Consequently, the minimum of $u_{jk}$ over all $k \ge j$ is attained at some $k \ge j$ such that $\hat{p}(x_k) < \hat{p}(x_{k+1})$ or $k = n$, and this entails that $(j,k) \in \mathcal{J}$. The statement about $L_i$ follows from the one about $U_i$ when $x_1, \dots, x_n$ are replaced by $1-x_n, \dots, 1-x_1$ and $Y_1, \dots, Y_n$ by $-Y_n, \dots, -Y_1$.
\end{proof}

\begin{proof}[Proof of Theorem \ref{thm:iso}]
	Define $L_i^{\alpha,\mathrm{YB}} = L^{\alpha,\mathrm{YB}}(x_i)$, $U_i^{\alpha,\mathrm{YB}} = U^{\alpha,\mathrm{YB}}(x_i)$. The inequalities $L_i^{\alpha,\mathrm{nc}} \le L_i^{\alpha,\mathrm{raw}}$ and $U_i^{\alpha,\mathrm{raw}} \le U_i^{\alpha,\mathrm{nc}}$, as well as $L_i^{\alpha,\mathrm{YB}} \le \hat{p}(x_i) \le U_i^{\alpha,\mathrm{YB}}$ hold by construction. It is therefore sufficient to show that $L_i^{\alpha,\mathrm{YB}} \le L_i^{\alpha,\mathrm{raw}}$ and $U_i^{\alpha,\mathrm{raw}} \le U_i^{\alpha,\mathrm{YB}}$. As to the inequality $U_i^{\alpha,\mathrm{raw}} \le U_i^{\alpha,\mathrm{YB}}$, we know that $U_i^{\alpha,\mathrm{YB}}$ equals
	\[
	u_{jk}^{\mathrm{YB}}
	= Z_{jk}^{\mathrm{iso}} n_{jk}^{-1} + \tau n_{jk}^{-1/2}
	\]
	for some $(j,k) \in \mathcal{J}$ with $j = \min\{s : x_s \ge x_i\}$ and $\hat{p}(x_k) < \hat{p}(x_{k+1})$ or $k = n$, where $\tau = \sqrt{\log\{(N^2 + N)/\alpha\}/2}$. As explained later, this implies that
	\begin{equation}
		\label{eq:Z.and.Ziso}
		Z_{jk} \le Z_{jk}^{\mathrm{iso}}
		\quad\text{if} \ \hat{p}(x_k) < \hat{p}(x_{k+1}) \ \text{or} \ k = n .
	\end{equation}
	But then it follows from Corollary~\ref{cor:hoeffding.2} that $U_i^{\alpha,\mathrm{YB}} = u_{jk}^{\mathrm{YB}}$ is greater than or equal to
	\[
	Z_{jk}n_{jk}^{-1} + \tau n_{jk}^{-1/2}
	\ge u^{\alpha/(N^2+N)}(Z_{jk},n_{jk})
	\ge U_i^{\alpha,\mathrm{raw}} .
	\]
	
	Inequality \eqref{eq:Z.and.Ziso} follows from a standard result about isotonic regression \citep[see for example][Characterization II]{Henzi2020}. The index interval $\{j,\ldots,k\}$ may be partitioned into index intervals $\{\ell,\ldots,m\} = \{j,\ldots,n\} \cap \{s : \hat{p}(x_s) = \hat{q}\}$, where $\hat{q}$ is any value in $\{\hat{p}(x_j),\ldots,\hat{p}(x_k)\}$. For such an index interval, $Z_{\ell m} \le Z_{\ell m}^{\mathrm{iso}}$, with equality if $\hat{q} > \hat{p}(x_j)$.
	
	The inequality for the lower bound follows from the one for the upper bound when $x_1, \dots, x_n$ are replaced by $-x_n, \dots, -x_1$ and $Y_1, \dots, Y_n$ by $1-Y_n, \dots, 1-Y_1$.
\end{proof}

\small
\bibliographystyle{apalike}
\bibliography{CalibrationBands_V9arXiv_biblio}

\begin{thebibliography}{}

\bibitem[Allison, 2014]{Allison2014}
Allison, P.~J. (2014).
\newblock Measures of fit for logistic regression.
\newblock {\em Paper 1485-2014, SAS Global Forum 2014}, pages 1--12.

\bibitem[Bertolini et~al., 2000]{Bertolini2000}
Bertolini, G., D'Amico, R., Nardi, D., Tinazzi, A., and Apolone, G. (2000).
\newblock One model, several results: the paradox of the {H}osmer-{L}emeshow
  goodness-of-fit test for the logistic regression model.
\newblock {\em Journal of epidemiology and biostatistics}, 5:251--253.

\bibitem[Clopper and Pearson, 1934]{Clopper1934}
Clopper, C.~J. and Pearson, E.~S. (1934).
\newblock The use of confidence or fiducial limits illustrated in the case of
  the binomial.
\newblock {\em Biometrika}, 26:404--413.

\bibitem[Dimitriadis et~al., 2021]{DGJ_2021}
Dimitriadis, T., Gneiting, T., and Jordan, A.~I. (2021).
\newblock Stable reliability diagrams for probabilistic classifiers.
\newblock {\em Proceedings of the National Academy of Sciences},
  118:e2016191118.

\bibitem[D{\"u}mbgen, 1998]{Duembgen1998}
D{\"u}mbgen, L. (1998).
\newblock New goodness-of-fit tests and their application to nonparametric
  confidence sets.
\newblock {\em The Annals of Statistics}, 26:288--314.

\bibitem[Guntuboyina and Sen, 2018]{GuntuboyinaSen2018}
Guntuboyina, A. and Sen, B. (2018).
\newblock {Nonparametric shape-restricted regression}.
\newblock {\em Statistical Science}, 33(4):568--594.

\bibitem[Hall and Horowitz, 2013]{Hall2013}
Hall, P. and Horowitz, J. (2013).
\newblock A simple bootstrap method for constructing nonparametric confidence
  bands for functions.
\newblock {\em The Annals of Statistics}, 41:1892--1921.

\bibitem[Henzi et~al., 2022]{Henzi2020}
Henzi, A., Moesching, A., and D{\"u}mbgen, L. (2022+).
\newblock Accelerating the pool-adjacent-violators algorithm for isotonic
  distributional regression.
\newblock {\em Methodology and Computing in Applied Probability}.
\newblock to appear.

\bibitem[Hoeffding, 1963]{Hoeffding1963}
Hoeffding, W. (1963).
\newblock Probability inequalities for sums of bounded random variables.
\newblock {\em Journal of the American Statistical Association}, 58:13--30.

\bibitem[Hosmer and Lemeshow, 1980]{HosmerLemeshow1980}
Hosmer, D.~W. and Lemeshow, S. (1980).
\newblock Goodness of fit tests for the multiple logistic regression model.
\newblock {\em Communications in Statistics - Theory and Methods},
  9:1043--1069.

\bibitem[Hosmer et~al., 2013]{Hosmer2013Book}
Hosmer, D.~W., Lemeshow, S., and Sturdivant, R.~X. (2013).
\newblock {\em Applied logistic regression}.
\newblock Wiley Series in Probability and Statistics. Wiley, Hoboken, N.J,
  third edition.

\bibitem[Johnson et~al., 2005]{Johnson2005}
Johnson, N.~L., Kemp, A.~W., and Kotz, S. (2005).
\newblock {\em Univariate discrete distributions}.
\newblock Wiley Series in Probability and Statistics. Wiley, Hoboken, NJ, third
  edition.

\bibitem[Koenker and Yoon, 2009]{Koenker2009}
Koenker, R. and Yoon, J. (2009).
\newblock Parametric links for binary choice models: A {Fisherian}–{Bayesian}
  colloquy.
\newblock {\em Journal of Econometrics}, 152:120--130.

\bibitem[Kramer and Zimmerman, 2007]{Kramer2007}
Kramer, A.~A. and Zimmerman, J.~E. (2007).
\newblock Assessing the calibration of mortality benchmarks in critical care:
  The hosmer-lemeshow test revisited.
\newblock {\em Critical care medicine}, 35:2052--2056.

\bibitem[M{\"o}sching and D{\"u}mbgen, 2020]{Moesching2020}
M{\"o}sching, A. and D{\"u}mbgen, L. (2020).
\newblock Monotone least squares and isotonic quantiles.
\newblock {\em Electronic Journal of Statistics}, 14:24--49.

\bibitem[{National Center for Health Statistics}, 2017]{Natality2017}
{National Center for Health Statistics} (2017).
\newblock {NCHS' Vital Statistics Natality Birth Data}.
\newblock
  \href{https://data.nber.org/data/natality.html}{https://data.nber.org/data/natality.html}.
\newblock Online; accessed 13 January 2021.

\bibitem[Nattino et~al., 2014]{Nattino2014}
Nattino, G., Finazzi, S., and Bertolini, G. (2014).
\newblock A new calibration test and a reappraisal of the calibration belt for
  the assessment of prediction models based on dichotomous outcomes.
\newblock {\em Statistics in Medicine}, 33:2390--2407.

\bibitem[Nattino et~al., 2020a]{Nattino2020}
Nattino, G., Pennell, M.~L., and Lemeshow, S. (2020a).
\newblock Assessing the goodness of fit of logistic regression models in large
  samples: A modification of the hosmer-lemeshow test.
\newblock {\em Biometrics}, 76:549--560.

\bibitem[Nattino et~al., 2020b]{Nattino2020Rejoinder}
Nattino, G., Pennell, M.~L., and Lemeshow, S. (2020b).
\newblock Rejoinder to “assessing the goodness of fit of logistic regression
  models in large samples: A modification of the hosmer-lemeshow test”.
\newblock {\em Biometrics}, 76:575--577.

\bibitem[Paul et~al., 2013]{Paul2013}
Paul, P., Pennell, M.~L., and Lemeshow, S. (2013).
\newblock Standardizing the power of the {Hosmer}–{Lemeshow} goodness of fit
  test in large data sets.
\newblock {\em Statistics in Medicine}, 32:67--80.

\bibitem[Quinn et~al., 2016]{Quinn2016PretermDef}
Quinn, J.-A., Munoz, F.~M., Gonik, B., Frau, L., Cutland, C., Mallett-Moore,
  T., Kissou, A., Wittke, F., Das, M., Nunes, T., Pye, S., Watson, W., Ramos,
  A.-M.~A., Cordero, J.~F., Huang, W.-T., Kochhar, S., Buttery, J., and
  {Brighton Collaboration Preterm Birth Working Group} (2016).
\newblock Preterm birth: Case definition \& guidelines for data collection,
  analysis, and presentation of immunisation safety data.
\newblock {\em Vaccine}, 34(49):6047--6056.

\bibitem[{R Core Team}, 2022]{R2022}
{R Core Team} (2022).
\newblock {\em R: A language and environment for statistical computing}.
\newblock R Foundation for Statistical Computing, Vienna, Austria.

\bibitem[Roelofs et~al., 2020]{Roelofs2020}
Roelofs, R., Cain, N., Shlens, J., and Mozer, M.~C. (2020).
\newblock Mitigating bias in calibration error estimation.
\newblock {\em Preprint}.
\newblock
  \href{https://arxiv.org/abs/2012.08668}{https://arxiv.org/abs/2012.08668}.

\bibitem[Sen et~al., 2010]{Sen2010}
Sen, B., Banerjee, M., and Woodroofe, M. (2010).
\newblock {Inconsistency of bootstrap: The Grenander estimator}.
\newblock {\em The Annals of Statistics}, 38(4):1953--1977.

\bibitem[Shaked and Shanthikumar, 2007]{Shaked2007}
Shaked, M. and Shanthikumar, J.~G. (2007).
\newblock {\em Stochastic orders}.
\newblock Springer Series in Statistics. Springer, New York.

\bibitem[Stodden et~al., 2016]{Stodden2016}
Stodden, V., McNutt, M., Bailey, D.~H., Deelman, E., Gil, Y., Hanson, B.,
  Heroux, M.~A., Ioannidis, J.~P., and Taufer, M. (2016).
\newblock Enhancing reproducibility for computational methods.
\newblock {\em Science}, 354(6317):1240--1241.

\bibitem[Tutz, 2011]{Tutz2011}
Tutz, G. (2011).
\newblock {\em Regression for Categorical Data}.
\newblock Cambridge University Press, Cambridge.

\bibitem[{World Health Organization}, 2015]{def_who}
{World Health Organization} (2015).
\newblock {\em International statistical classification of diseases and related
  health problems}.
\newblock World Health Organization.
\newblock 10th revision, fifth edition.
  \href{https://apps.who.int/iris/handle/10665/246208}{https://apps.who.int/iris/handle/10665/246208}.
  Online; accessed 13 January 2021.

\bibitem[Wright, 1981]{Wright1981}
Wright, F.~T. (1981).
\newblock The asymptotic behavior of monotone regression estimates.
\newblock {\em Annals of Statistics}, 9:443--448.

\bibitem[Yang and Barber, 2019]{Yang2019}
Yang, F. and Barber, R.~F. (2019).
\newblock Contraction and uniform convergence of isotonic regression.
\newblock {\em Electronic Journal of Statistics}, 13:646--677.

\bibitem[Yu and Kumbier, 2020]{Yu2020}
Yu, B. and Kumbier, K. (2020).
\newblock Veridical data science.
\newblock {\em Proceedings of the National Academy of Sciences},
  117(8):3920--3929.

\end{thebibliography}

\newpage
\setcounter{page}{1}
\setcounter{footnote}{0}
\begin{center}
	SUPPLEMENTARY MATERIAL FOR   \vspace{10pt} \\
	{\Large\bf {Honest calibration assessment for binary outcome predictions}}\\[10pt]
	Timo Dimitriadis, Lutz D\"umbgen, Alexander Henzi, Marius Puke and Johanna Ziegel \\[5pt]
	\today \\ 
\end{center}


\renewcommand{\thesection}{S.\arabic{section}}   
\renewcommand{\thepage}{S.\arabic{page}}  
\renewcommand{\thetable}{S\arabic{table}}   
\renewcommand{\thefigure}{S\arabic{figure}}   
\renewcommand{\theequation}{S\arabic{equation}}   
\renewcommand{\thelemma}{S\arabic{lemma}}   




\setcounter{section}{0}
\setcounter{table}{0}
\setcounter{figure}{0}
\setcounter{equation}{0}
\setcounter{lemma}{0}

The Supplementary Materials contains four parts. 
Section \ref{sec:RoundingDetails} demonstrates the effect of using a restricted set of index intervals.
Section \ref{sec:ModelSpecApplication} gives details on the regression model specifications in the low birth weight application.
Section \ref{sec:YBBounds} illustrates the gains of our method upon the wider bands of \cite{Yang2019} in this application.
Section \ref{sec:AddProofs} gives additional proofs.

\section{The effect of using a restricted  family of index intervals}
\label{sec:RoundingDetails}

As discussed before equation \eqref{eqn:Rounding} in the main manuscript and informally described as the rounding method, the confidence bands in equations \eqref{eq:upper_bound} and \eqref{eq:lower_bound} also achieve correct coverage in the sense of \eqref{eq:coverage} if we only consider a restricted family of index pairs $\widetilde{\mathcal{J}} \subset \mathcal{J}$.
Besides the reduced computation time, which we discuss below, this has the additional advantage that it reduces the correction factor of the significance level from $|\mathcal{J}| = N^2 + N$ to $|\widetilde{\mathcal{J}}|$.
However, the optimal index interval as selected by the infimum in \eqref{eq:upper_bound} and the supremum in \eqref{eq:lower_bound}  over the full set $\mathcal{J}$ may not be contained in $\widetilde{\mathcal{J}}$, resulting in a possibly wider confidence band.
While a general balancing of these two opposing effects is difficult without knowledge of the true form of $p$, Figure \ref{fig:rounding} illustrates the effect of the rounding method with the explicit choice of $\widetilde{\mathcal{J}}$ in  \eqref{eqn:Rounding}  based the choices $K \in \{20, 100, \infty \}$ on simulated data.

First assume that the curve $p$ is (almost) flat.
Then, the infimum in the computation of $U^{\alpha, \text{raw}}(x)$ in equation  \eqref{eq:upper_bound} is most likely attained for the largest index interval in $\mathcal{J}$, i.e., by computing the Clopper-Pearson confidence bounds using all indices $x_j \ge x$.
Hence, as long as $\widetilde{\mathcal{J}}$ in equation \eqref{eqn:Rounding} approximately contains this full index interval, there is almost no effect of the rounding in terms of an inefficient selection of the index intervals.
However, as the correction factor of the significance level is reduced from $N^2 + N$ to $|\widetilde{\mathcal{J}}|$, this entails thinner intervals as can be seen in the region $x \ge 0.3$ in Figure \ref{fig:rounding}.

\begin{figure}[tb]
	\centering
	\includegraphics[width=\textwidth]{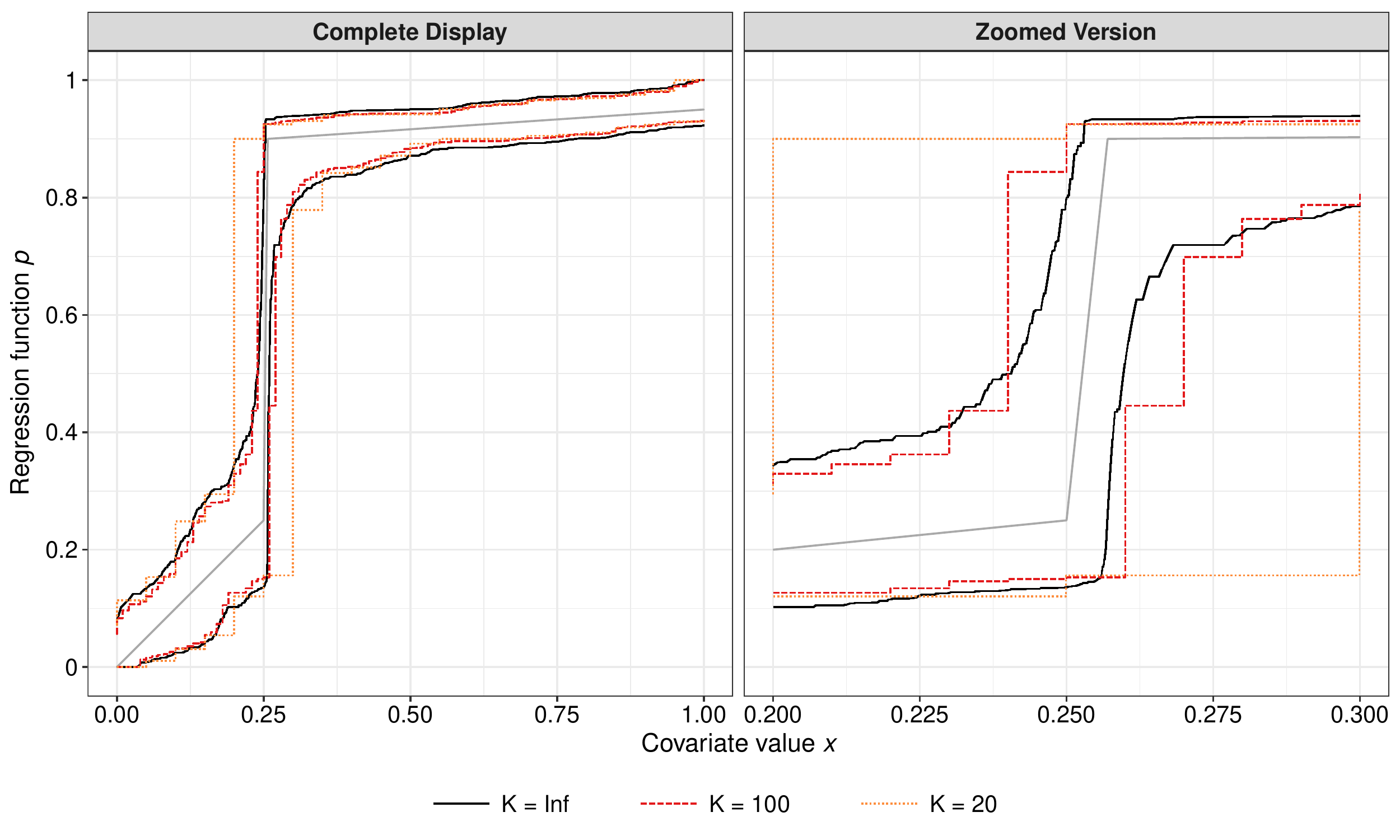}
	\caption{
		This figure illustrates the effect of the rounding method in \eqref{eqn:Rounding} with the choices $K \in \{20, 100, \infty \}$, the latter corresponding to no rounding. We simulate $n=10\,000$ data points with a true regression function $p(\cdot)$ that linearly connects the points $(0,0)$, $(0.25,0.25)$, $(0.275,0.9)$ and $(1,0.95)$.
		The left plot shows the true regression function in gray together with the confidence bands and the plot on the right side is a magnified version  focusing on the steep area of the regression function.
	}
	\label{fig:rounding}
\end{figure}

In contrast, in steeper regions of $p$, the inefficient index interval selection mechanism stemming from a restricted $\widetilde{\mathcal{J}}$ might have a bigger adverse effect than the lower correction factor of the significance level.
This effect can be observed in the particularly steep region around $x=0.25$ in the zoomed version of the plot in the right side of Figure \ref{fig:rounding}, where the choice $K=\infty$ yields the most narrow bands.
Finally, the region with $x \le 0.25$ having unit slope (pertaining to the most important case of perfectly calibrated predictions in applications on calibration assessment) shows that rounding with $K=100$ improves the bands whereas further reducing $K$ results in too coarse approximations, also limiting the adaptivity of the band derived in Theorem \ref{thm:asymptotics}.

Furthermore, the choice of $K$ in $\widetilde{\mathcal{J}}$ massively affects the computation times required for the bands.
Figure \ref{fig:computation_time} displays the required computation time to compute the infimum and supremum in equations  \eqref{eq:upper_bound} and \eqref{eq:lower_bound} for the full index set $\mathcal{J}$, and two reduced sets $\widetilde{\mathcal{J}}$ with $K=100$ and $K=1000$ together with the computation time of the \cite{Yang2019} bands.

\begin{figure}[tb]
	\centering
	\includegraphics[width=\textwidth]{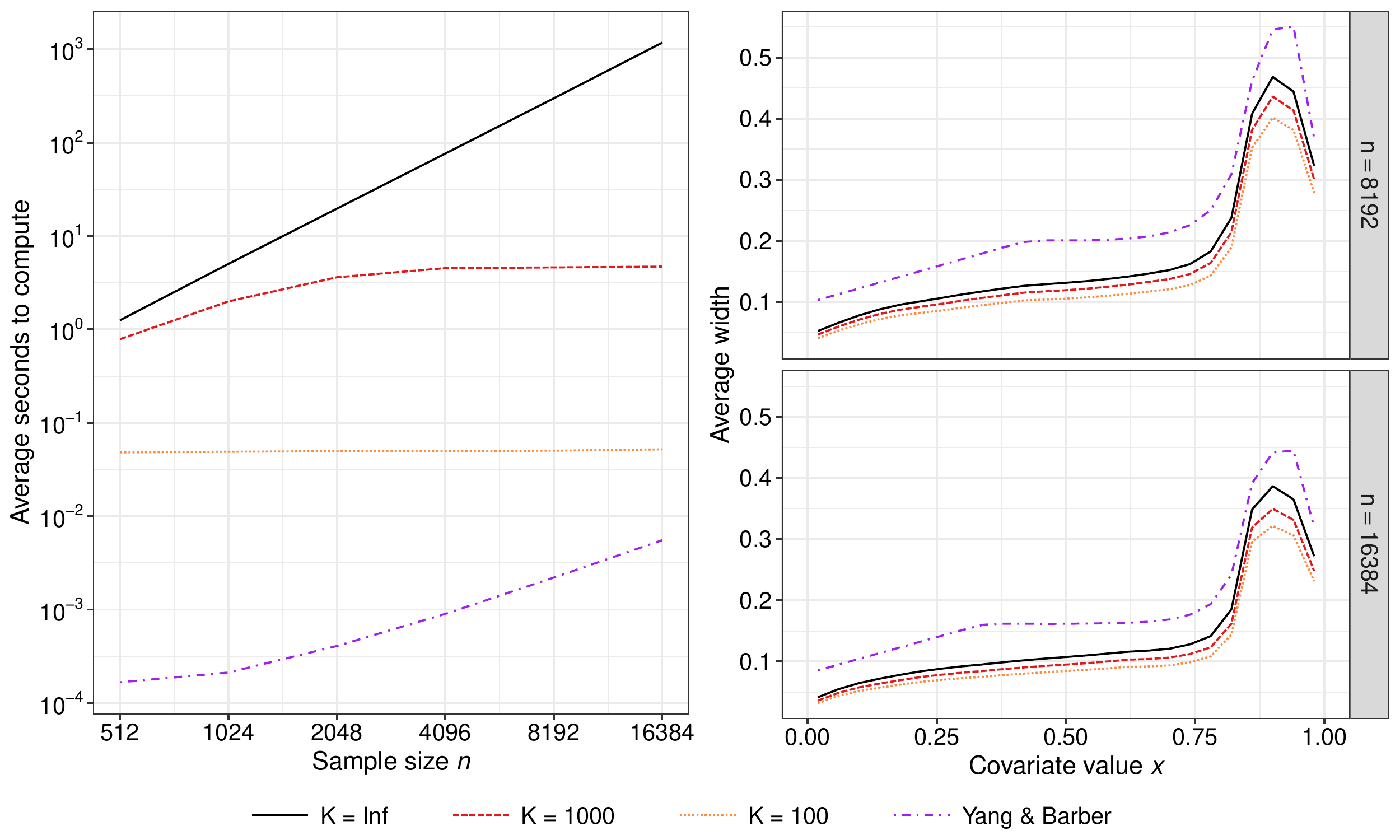}
	\caption{
		The left plot shows the computation time in simulations of the Kink process with $s=0.8$ from Section \ref{sec:Simulations} of the main manuscript  depending on the sample size for a range of rounding values $K$ and the bands of \citet{Yang2019}. We use a logarithmic scale for both axes.
		The two plots on the right show the average width of these four versions of the confidence bands depending on the covariate value $x$. 
	}
	\label{fig:computation_time}
\end{figure}

As expected, we find that the computation time of our standard method grows at rate $n^2$, where computing the bands takes up to 15 minutes for $n=16\,384$.
The computation time decreases drastically for both rounding methods, and even for $K=1000$, the bands for $n=16\,384$ are computed within five seconds.
For an increasing sample size, the computation time stabilizes once the size of $\widetilde{\mathcal{J}}$ stays constant.
Finally, the bands of \cite{Yang2019} have the lowest computation time throughout all considered sample sizes as it suffices to take the minimum over endpoints of constancy regions of the isotonic regression estimate for these bands, which is explained in the end of Section \ref{sec:Relation_to_YB} of the main manuscript.
The display of the average width on the right-hand side of Figure \ref{fig:computation_time} confirms that medium values of e.g., $K=100$ or $K=1000$ yield relatively narrow bands.

Summarizing the results of this section, the rounding method can drastically decrease the computation time and even results in narrower bands for all but very steep regions of the regression function.

\section{Model specifications in the low birth weight application}
\label{sec:ModelSpecApplication}

We give some additional details on the model specifications of the application here.
The first two models are based on the probit link function whereas the third one uses the cauchit link function \citep{Koenker2009}.
The second model uses the week of gestation as a continuous variable whereas the first and third models use the week of gestation as a categorical variable with left-closed and right-open intervals with lower interval limits of 0, 28, 32 and 37 weeks, which corresponds to the standard categorization of the World Health Organization \citep{Quinn2016PretermDef}.

Additionally, all three models contain the following common explanatory variables: 
the mother's age and its squared term, her body mass index prior to pregnancy, her smoking behavior as a categorical variable with left-closed and right-open intervals with lower limits of 0, 1, 9, and 20 cigarettes per day averaged over all three trimesters, individual binary variables for mother's diabetes, any form of hypertension, mother's education below or equal to eight years, employed infertility treatments, a cesarean in a previous pregnancy, a preterm birth in a previous pregnancy, current multiple pregnancy, the sex of the unborn child, and an infection of one of the following: gonorrhea, syphilis, chlamydia, hepatitis b, hepatitis c.
Additional details on the data are given in the user guide under \href{https://data.nber.org/natality/2017/natl2017.pdf}{https://data.nber.org/natality/2017/natl2017.pdf}.

\section{The Yang and Barber bands in the low birth weight application}
\label{sec:YBBounds}

Figure \ref{fig:appl_lbw_AS} illustrates the bands of \cite{Yang2019} with a minimal variance factor of $\sigma^2 = 1/4$ in the three binary regression specifications presented in Figures \ref{fig:ApplicationIntro} and \ref{fig:appl_lbw} of the main manuscript.
We see that these bands are substantially wider than ours, especially in the most important region of small probability predictions, e.g., illustrated in the zoomed version in the upper right panel of the figure.
This improvement is theoretically explained by Theorem \ref{thm:asymptotics} (iv) and the corresponding discussion thereafter:
Our confidence bands adapt to the variance of the observation, i.e., their width is smaller for $p$ close to zero or one as compared to $p$ around $0.5$.

\begin{figure}[tb]
	\centering
	\includegraphics[width=\textwidth]{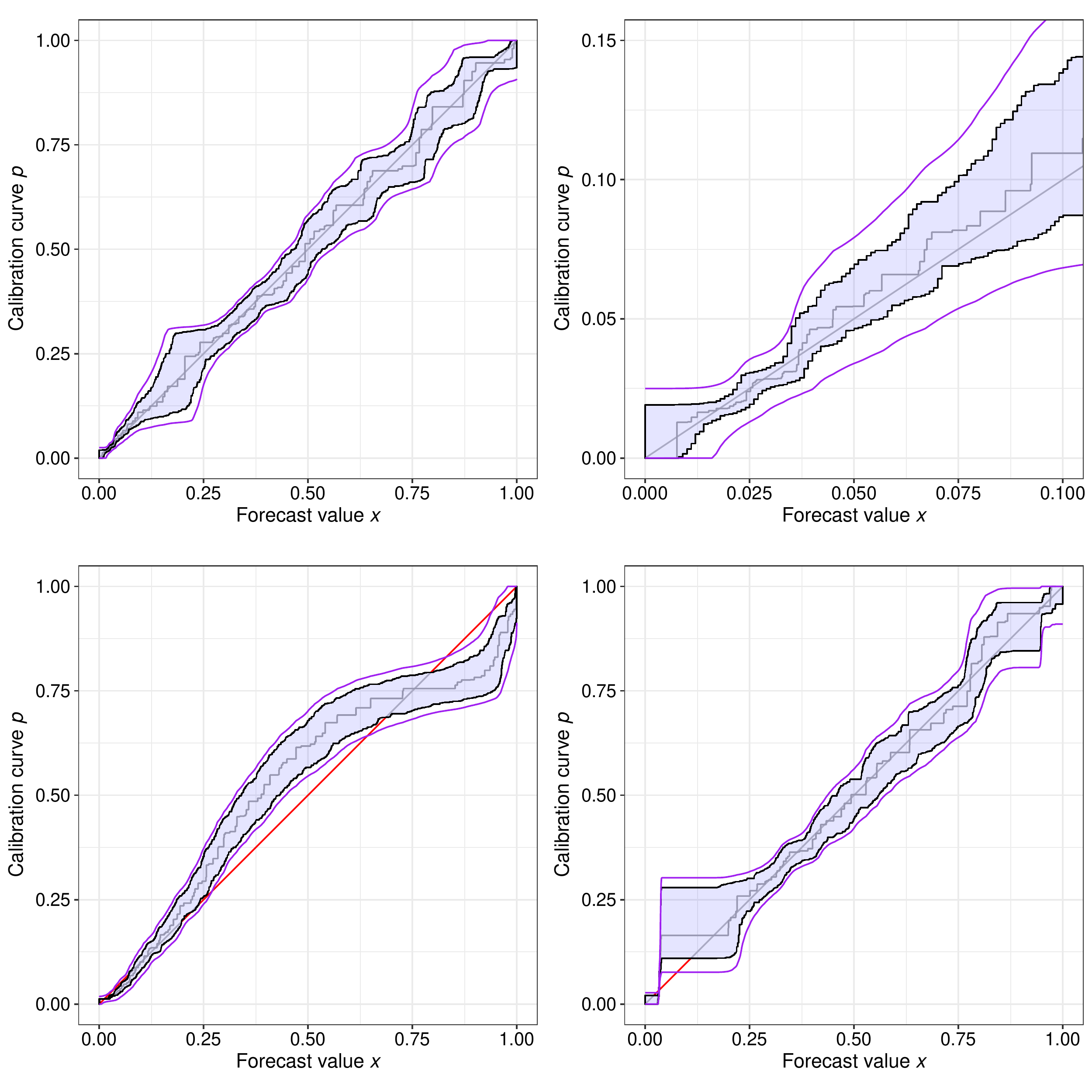}
	\caption{This figures shows the plots given in Figures  \ref{fig:ApplicationIntro} and \ref{fig:appl_lbw}  augmented with the bands of \citet{Yang2019} in purple color.}
	\label{fig:appl_lbw_AS}
\end{figure}

\section{Additional Proofs}
\label{sec:AddProofs}

\begin{proof}[Proof of Lemma \ref{lem:isopart}]
	Let $\hat{q}_1 < \cdots < \hat{q}_b$ be the different elements of $\{\hat{p}(x_i)\colon 1 \le i \le n\}$, where we assume that $b \ge 2$. There exists a partition of $\{1,\ldots,n\}$ into index intervals $I_1,\ldots,I_b$ such that $\hat{q}_\ell = |I_\ell|^{-1} \sum_{i \in I_\ell} Y_i$. For any integer $d \ge 1$, let $M_d$ be the number of indices $\ell$ such that $|I_\ell| = d$. Since $\sum_{i\in I_\ell} Y_i \in \{0,1,\ldots,d\}$, the numbers $M_d$ satisfy the following constraints: $M_d \in [0,d+1]$, and $\sum_{d=1}^n M_d d = n$. The question is, how large the number $b = \sum_{d=1}^n M_d$ can be under these constraints, where we drop the restriction that the $M_d$ are integers. Suppose that $M_c < c+1$ and $M_{c'} > 0$ for integers $1 \le c < c'$. Then we may replace $(M_c,M_{c'})$ with $(M_c + \gamma/c, M_{c'} - \gamma/c')$, where $\gamma$ is the minimum of $(c+1-M_c)c$ and $M_{c'} c'$. This does not affect the constraints, but the sum $\sum_{d=1}^n M_d$ increases strictly, while $M_c = c+1$ or $M_{c'} = 0$. Eventually, we obtain an integer $d_o \ge 1$ such that $M_d = d+1$ if $1 \le d \le d_o$ and $M_d = 0$ for $d \ge d_o + 2$. In particular,
	\[
	n \ge \sum_{d=1}^{d_o} (d+1)d = \frac{(d_o+2)(d_o+1)d_o}{3} > \frac{d_o^3}{3} ,
	\]
	whence $d_o < (3n)^{1/3}$, while
	\[
	b \le \sum_{d=1}^{d_o+1} (d+1) = \frac{d_o(d_o+3)}{2} \le C n^{2/3} ,
	\]
	where $C = 3^{2/3}(1 + 3/6^{1/3})/2 < 3$.
\end{proof}

For the proof of Theorem~\ref{thm:asymptotics}, we need an inequality for the auxiliary function $K(\cdot,\cdot)$ in Lemma~\ref{lem:hoeffding.2} which follows from \citet[Proposition~2.1]{Duembgen1998}.

\begin{lemma}
	\label{lem:bounds.for.K}
	For arbitrary $q \in [0,1]$, $\xi \in (0,1)$ and $\gamma > 0$, the inequality $K(q,\xi) \le \gamma$ implies that
	\[
	|\xi - q|
	\le \sqrt{2\gamma q(1 - q)} + |1 - 2q| \gamma .
	\]
\end{lemma}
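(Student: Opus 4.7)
The plan is to set $t := \xi - q$ and study the convex function $g(t) := K(q, q+t)$, which satisfies $g(0) = g'(0) = 0$ on its domain $(-q, 1-q)$ and has derivative $g'(t) = t/[(q+t)(1-q-t)]$. The task is then to bound the radius of the sublevel set $\{t : g(t) \le \gamma\}$, and I would turn this into a sharp quadratic lower bound on $g$ followed by solving a quadratic inequality in $|t|$. The boundary cases $q \in \{0,1\}$ are trivial (in both, $K(q,\xi) \le \gamma$ directly yields $|\xi - q| \le 1 - e^{-\gamma} \le \gamma$, matching the bound since $|1-2q|=1$ and $q(1-q)=0$ there), so from now on I take $q \in (0,1)$.

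The starting point is the integral representation
\[
g(t) = \int_0^t \frac{s\,ds}{(q+s)(1-q-s)}
\]
together with the identity $(q+s)(1-q-s) = q(1-q) + s(1-2q) - s^2$. I would then split into two cases. When $t$ and $1-2q$ have opposite signs (e.g., $q > 1/2$ and $t > 0$), the term $s(1-2q) - s^2$ is nonpositive on the integration path, so $(q+s)(1-q-s) \le q(1-q)$ throughout, and directly $g(t) \ge t^2/[2 q(1-q)]$, yielding the even stronger $|t| \le \sqrt{2\gamma q(1-q)}$ without any correction term. When $t$ and $1-2q$ have the same sign, I would upper-bound the denominator by $q(1-q) + |s|\,|1-2q|$ and compute the resulting integral via the elementary inequality $x - \log(1+x) \ge x^2/[2(1+x)]$ (valid for $x \ge 0$) to obtain
\[
g(t) \ge \frac{t^2}{2\bigl[q(1-q) + |t|\,|1-2q|\bigr]}.
\]
Feeding $g(t) \le \gamma$ into this yields the quadratic inequality $|t|^2 - 2\gamma|1-2q|\,|t| - 2\gamma q(1-q) \le 0$; solving for $|t|$ and tidying with $\sqrt{a+b} \le \sqrt{a}+\sqrt{b}$ delivers a bound of the promised shape.

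The main obstacle is obtaining the sharp constant $1$ (rather than $2$) in front of $|1-2q|\gamma$ in the agreeing-signs case. The naive execution above gives $|t| \le 2|1-2q|\gamma + \sqrt{2\gamma q(1-q)}$, off by a factor of two. To recover the precise constant, one must retain the $-s^2$ term in the denominator of the integrand instead of discarding it, which sharpens the elementary integral inequality enough to capture the true asymmetry of $K$ around $t=0$. This is effectively the content of Proposition~2.1 of \cite{Duembgen1998} that is invoked in the paper, and I would either cite it directly or reproduce that tightening in the agreeing-signs case.
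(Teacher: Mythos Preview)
The paper does not prove this lemma at all; it simply cites Proposition~2.1 of \cite{Duembgen1998}. Your proposal therefore already matches the paper's proof (you end by proposing to cite the same result for the sharp constant), and in fact goes further by supplying a correct elementary argument that yields $|\xi-q| \le \sqrt{2\gamma q(1-q)} + 2|1-2q|\gamma$. Since the only downstream use of this lemma in the paper (inequality \eqref{ineq:ujk} in the proof of Theorem~\ref{thm:asymptotics}) absorbs the constant into a generic $C$, your self-contained version with constant $2$ would already suffice for all of the paper's purposes.
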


\begin{proof}[Proof of Theorem \ref{thm:asymptotics}]
	For notational convenience, we often drop the additional subscript $n$, e.g.\ we write $x_i$ instead of $x_{ni}$. For symmetry reasons, it suffices to verify the assertions about $U^{\alpha,\mathrm{raw}}$. We only consider sample sizes $n$ such that the inequalities for $W_n(B)$ in Assumption~\ref{ass:A} are valid.
	
	In what follows, let $C$ be a generic (large) constant depending only on $C_1, C_2$. Its value may change in each instance. It follows from Corollary~\ref{cor:hoeffding.2} and Lemma~\ref{lem:bounds.for.K} that for sufficiently large $n$, simultaneously for all $(j,k) \in \mathcal{J}$,
	\begin{equation}
		\label{ineq:ujk}
		u^{\alpha/(N^2+N)}(Z_{jk},n_{jk})
		\le \hat{p}_{jk} + C \min \biggl\{
		\sqrt{\frac{\log(n) \hat{p}_{jk}}{n_{jk}}}
		+ \frac{\log(n)}{n_{jk}} ,
		\sqrt{\frac{\log(n)}{n_{jk}}} \biggr\} ,
	\end{equation}
	where $\hat{p}_{jk} = Z_{jk}/n_{jk}$. Note that we got rid of $\alpha$, because $\log\{(N^2 + N)/\alpha\} \le \log\{(n^2 + n)/\alpha\} = 2 \log(n) (1 + o(1))$ as $n \to \infty$. Moreover, one can deduce from Lemma~\ref{lem:hoeffding.2} that simultaneously for all $(j,k) \in \mathcal{J}$,
	\begin{equation}
		\label{ineq:phatjk}
		\hat{p}_{jk}
		\le p_{jk} + C \sqrt{\frac{\log(n)}{n_{jk}}}
	\end{equation}
	with asymptotic probability one, where $p_{jk} = \mathbb{E}(\hat{p}_{jk}) = n_{jk}^{-1} \sum_{i=j}^k p_i \in [p_j,p_k]$.
	
	As to part~(i), let $x \in [a_o,b_o)$ and $B(x) = [x,b_o]$. If $x \le b_o - C_2 \rho_n$, then it follows from Assumption~\ref{ass:A}  that $B(x) \cap \{x_1,\ldots,x_n\} = \{x_{j(x)},\ldots,x_{k(x)}\}$ with $(j(x),k(x)) \in \mathcal{J}$ such that
	\[
	n_{j(x)k(x)} = W_n\{B(x)\} \ge C_1 n (b_o - x) .
	\]
	Consequently, we may deduce from inequalities \eqref{ineq:ujk} and \eqref{ineq:phatjk} that with asymptotic probability one, simultaneously for all $x \in [a_o,b_o-C_2\rho_n]$,
	\[
	U_n^{\alpha,\mathrm{raw}}(x)
	\le u^{\alpha/(N^2+N)}(Z_{j(x)k(x)},n_{j(x)k(x)})
	\le \hat{p}_{j(x)k(x)} + C \sqrt{\rho_n/(b_o - x)} ,
	\]
	\[
	\hat{p}_{j(x)k(x)}
	\le p_{j(x),k(x)} + C \sqrt{\rho_n/(b_o - x)}
	= p(x) + C \sqrt{\rho_n/(b_o - x)} .
	\]
	These two inequalities imply that $U_n^{\alpha,\mathrm{raw}}(x) \le p(x) + C \sqrt{\rho_n/(b_o - x)}$ for $x \in [a_o, b_o - C_2\rho_n]$. But for $x \in [b_o - C_2\rho_n, b_o)$, the term $\sqrt{\rho_n/(b_o - x)}$ is at least $C_2^{-1/2}$, and $U_n^{\alpha,\mathrm{raw}}(x) - p(x) \le 1$. Hence we can deduce part~(i) by replacing $C$ with $\max\{C,C_2^{1/2}\}$.
	
	As to part~(ii), let $B(x) = [x,x+h_n]$ for $x \in [a_o, b_o-h_n]$ with some constant $h_n \ge C_2\rho_n$ to be determined later. By Assumption~\ref{ass:A}, $B(x) \cap \{x_1,\ldots,x_n\} = \{x_{j(x)},\ldots,x_{k(x)}\}$ with $(j(x),k(x)) \in \mathcal{J}$ satisfying
	\[
	n_{j(x)k(x)} = W_n\{B(x)\} \ge C_1 n h_n .
	\]
	Consequently, we may deduce from inequalities \eqref{ineq:ujk}, \eqref{ineq:phatjk} and Lipschitz-continuity of $p$ on $[a_o,b_o]$ with Lipschitz constant $L$ that with asymptotic probability one, simultaneously for all $x \in [a_o,b_o-h_n]$,
	\begin{align*}
		U_n^{\alpha,\mathrm{raw}}(x)
		\le u^{\alpha/(N^2+N)}(Z_{j(x)k(x)},n_{j(x)k(x)})
		& \le \hat{p}_{j(x)k(x)} + C \sqrt{\rho_n/h_n} , \\
		\hat{p}_{j(x)k(x)}
		& \le p_{j(x),k(x)} + C \sqrt{\rho_n/h_n} , \\
		p_{j(x)k(x)}
		& \le p(x) + L h_n .
	\end{align*}
	These three inequalities imply that $U_n^{\alpha,\mathrm{raw}}(x) \le p(x) + C \sqrt{\rho_n/h_n} + L h_n$. If we set $h_n = \rho_n^{1/3} L^{-2/3}$, the upper bound becomes $C (L\rho_n)^{1/3}$. This requires $\rho_n^{1/3} L^{-2/3} \ge C_2 \rho_n$, though. But in case of $\rho_n^{1/3} L^{-2/3} \le C_2 \rho_n$, the term $(L \rho_n)^{1/3}$ is at least $C_2^{-1/2}$, so we can deduce part~(ii) by replacing $C$ with $\max\{C, C_2^{1/2}\}$.
	
	Part~(iii) can be verified similarly as part~(i). let $B(x) = [x,x_o)$ for $x \in [a_o,x_o)$. In case of $x \le x_o - C_2\rho_n$, $B(x) \cap \{x_1,\ldots,x_n\} = \{x_{j(x)},\ldots,x_{k(x)}\}$ with $(j(x),k(x)) \in \mathcal{J}$ such that $n_{j(x)k(x)} \ge C_1 n (x_o - x)$. Thus it follows from inequalities \eqref{ineq:ujk} and \eqref{ineq:phatjk} that with asymptotic probability one, simultaneously for all $x \in [a_o,x_o-C_2\rho_n]$,
	\[
	U_n^{\alpha,\mathrm{raw}}(x)
	\le u^{\alpha/(N^2+N)}(Z_{j(x)k(x)},n_{j(x)k(x)})
	\le \hat{p}_{j(x)k(x)} + C \sqrt{\rho_n/(b_o - x)} ,
	\]
	\[
	\hat{p}_{j(x)k(x)}
	\le p_{j(x),k(x)} + C \sqrt{\rho_n/(b_o - x)}
	\le p(x_o-) + C \sqrt{\rho_n/(b_o - x)} .
	\]
	If $x \in [x_o - C_2\rho_n, x_o)$, the term $\sqrt{\rho_n/(b_o - x)}$ is at least $C_2^{-1/2}$, so we can deduce part~(iii) be replacing $C$ with $\max\{C,C_2^{1/2}\}$.
	
	To verify part~(iv), let $B(x,y) = [x,y]$ for $a_o \le x < y \le b_o$. If $y - x \ge C_2 \rho_n$, then $B(x,y) \cap \{x_1,\ldots,x_n\} = \{x_{j(x,y)},\ldots,x_{k(x,y)}\}$ with $(j(x,y),k(x,y)) \in \mathcal{J}$ such that $n_{j(x,y)k(x,y)} \ge C_1 n (y - x)$. Hence, it follows from \eqref{ineq:ujk} and $p_{j(x,y)k(x,y)} \le p(y)$ that
	\begin{align*}
		\operatorname{\mathbb{E}} \bigl\{U_n^{\alpha,\mathrm{raw}}(x_n)\bigr\}
		&\le \operatorname{\mathbb{E}} \bigl\{
		u^{\alpha/(N^2 + N)}(Z_{j(x,y)k(x,y)},n_{j(x,y)k(x,y)})\bigl\} \\
		&\le \operatorname{\mathbb{E}} \Bigl\{ \hat{p}_{j(x,y)k(x,y)}
		+ C \bigl( \sqrt{\hat{p}_{j(x,y)k(x,y)} \rho_n/(y - x)} + \rho_n/(y - x) \bigr)
		\Bigr\} \\
		&\le p_{j(x,y)k(x,y)}
		+ C \bigl( \sqrt{ p_{j(x,y)k(x,y)} \rho_n/(y - x)} + \rho_n/(y - x) \bigr) \\
		&\le p(y) + C \bigl( \sqrt{ p(y) \rho_n/(y - x)} + \rho_n/(y - x) \bigr) \\
		&\le C \{ p(y) + \rho_n/(y - x) \} ,
	\end{align*}
	where the third inequality follows from Jensen's inequality, and the last inequality follows from $\sqrt{st} \le (s + t)/2$ for $s,t \ge 0$. This is true if $y - x \ge C_2 \rho_n$. But in case of $y - x \le C_2 \rho_n$, the term $\rho_n/(y - x)$ is at least $C_2^{-1}$, so we can deduce part~(iv) by replacing $C$ with $\max\{C,C_2\}$.
\end{proof}

\end{document}